\definecolor{skobeloff}{rgb}{0.0, 0.48, 0.45}
\definecolor{amber}{rgb}{1.0, 0.75, 0.0}
\definecolor{aogreen}{rgb}{0.0, 0.5, 0.0}
\definecolor{darksienna}{rgb}{0.24, 0.08, 0.08}
\definecolor{seagreen}{rgb}{0.18, 0.55, 0.34}
\definecolor{mahogany}{rgb}{0.75, 0.25, 0.0}
\setlist[enumerate,1]{label=\textnormal{(\emph{\roman*})}}
\newcommand{\dd}{\mathop{}\!\mathrm{d}}
\DeclareMathOperator*{\e}{e}
\DeclareMathOperator*{\dist}{dist}
\DeclareMathOperator*{\diag}{diag}
\DeclareMathOperator*{\Range}{Range}
\DeclareMathOperator*{\blkdiag}{blkdiag}
\DeclareMathOperator*{\Lip}{Lip}
\newcommand{\R}{\mathbb{R}}
\newcommand{\C}{\mathbb{C}}
\theoremstyle{plain}
\newtheorem{theorem}{Theorem}[section]
\newtheorem{lemma}[theorem]{Lemma}
\newtheorem{proposition}[theorem]{Proposition}
\newtheorem{definition}[theorem]{Definition}
\theoremstyle{definition}
\newtheorem{remark}[theorem]{Remark}
\newtheorem{assumption}{Assumption}
\numberwithin{equation}{section}
\numberwithin{table}{section}
\numberwithin{figure}{section}
\setlist[enumerate,1]{label=\textnormal{(\emph{\roman*})}}
\title{On the numerical computation of $R_0$ in periodic
environments}
\author{Dimitri Breda\thanks{CDLAb - Computational Dynamics Laboratory, University of Udine, Italy}\thanksgap{2mm}$^,$\thanks{Department of Mathematics, Computer Science and Physics, University of Udine, Italy}\thanksgap{2mm}$^,$\thanksmark{5}
,\ \ Simone De Reggi\thanksmark{1}\thanksgap{2mm}$^,$\thanks{Department of Mathematics, University of Trento, Italy}\thanksgap{2mm}$^,$\thanksmark{5} ,\ \ Jordi Ripoll \thanks{Department of Computer Science, Applied Mathematics and Statistics, University of Girona, Spain}\thanksgap{2mm}$^,$\thanks{e-mail: \url{dimitri.breda@uniud.it},  \url{simone.dereggi@unitn.it}, \url{jordi.ripoll@udg.edu}}}
\date{\vspace{-7ex}}
\begin{document}
\maketitle
\begin{abstract}
\noindent We propose a novel approach to approximate the basic reproduction number $R_0$ as spectral radius of the Next-Generation Operator in time-periodic population models by characterizing the latter via evolution semigroups. Once birth/infection and transition operators are identified,
we discretize them via either Fourier or Chebyshev collocation methods. Then $R_0$ is obtained by solving a generalized matrix eigenvalue problem. The order of convergence of the approximating reproduction numbers to the true one is shown to depend on the regularity of the model coefficients, and spectral accuracy is proved. We validate the theoretical results by discussing applications to epidemiology, viz. a large-size multi-group epidemic model with periodic contact rates, and a vector-borne disease model with seasonal vector recruitment. We illustrate how the method facilitates implementation compared to existing approaches and how it can be easily adapted to also compute type-reproduction numbers.

\medskip
\noindent\textbf{Keywords:} Basic reproduction number, next-generation operator,  seasonality, evolution semigroups,  spectral approximation, epidemic/extinction threshold.

\smallskip
\noindent\textbf{MSC codes:} 34L16, 37N25, 41A10, 47D06,  47B07, 65L60, 65L70, 92D25.
\end{abstract}

\section{Introduction}
The importance of seasonality in epidemiological and ecological models has been highlighted by many authors, e.g., \cite{altizer2006seasonality, buonomo2018seasonality, grassly2006seasonal, keeling2011} and references therein. In the modeling of infectious diseases, factors such as climate, school calendars, and human behavior introduce periodic fluctuations in transmission rates, contact patterns, and population demographics \cite[chapter 5]{keeling2011}. Similarly, in ecological models, birth and death rates, migration patterns, and resource availability often follow seasonal cycles. These temporal patterns can impact crucial parameters such as the Basic Reproduction Number (BRN) $R_0$,  
which represents the average number of new infections (offsprings) generated by a typical primary case (individual) during its entire period of infectiousness (fertility) in a fully susceptible population. As such, $R_0$ is a threshold parameter that determines whether a disease/population will persist or be eliminated. In the sequel, we only refer to ``birth'' as a new infection is interpreted as a birth in the demographic sense \cite[chapter 7, p.~162]{diekmann2013mathematical}. 

Mathematical models incorporating seasonality are described by nonautonomous systems with time-periodic coefficients. 
It is well known that for autonomous structured population models $R_0$ can be computed as the spectral radius of a Next-Generation Operator (NGO) \cite{ diekmann1990}, which can be obtained by splitting the linearization around a given equilibrium into birth and transition \cite{barril2018practical}. In particular, for Ordinary Differential Equations (ODEs) the NGO reduces to a Next-Generation Matrix \cite[section 7.2]{diekmann2013mathematical}.
However, in the time-periodic case the classical definition of the BRN for structured populations must be generalized \cite[section 7.9]{diekmann2013mathematical}.
In \cite{bacaer2012, Bacaer2006,  inaba2012new, inaba2019, Wang2008}, the authors showed that in periodic environments $R_0$ can be defined as the spectral radius of a NGO acting on a space of periodic functions, extending the definition and generational interpretation of \cite{diekmann1990} for autonomous models \cite{inaba2012new, inaba2019}.\footnote{Besides \cite{Bacaer2006} and related works, many other authors have attempted to extend the definition of $R_0$ to time-periodic models. See, e.g.,  \cite{heesterbeek1995threshold2, heesterbeek1995threshold} or \cite{bacaer2012, Inaba2012b} and the references therein.} 
As in the autonomous case, the NGO can be obtained by splitting the linearization of the model around a given equilibrium or periodic solution into birth and transition \cite{inaba2019, Wang2008}.
Notably, $R_0$ is a threshold parameter that does not depend on time;  yet the problem of its computation becomes infinite-dimensional, even for ODE models (although, for some very special cases, e.g., time-periodic triangular matrices as in \cite{Mitchell2017, Wang2008}, one gets the exact $R_0$ as the one of the averaged problem).
% The interpretation of $R_0$ in periodic environments is analogous to that of constant ones: it represents
% the average number of new infections (offsprings) generated by a typical infected individual during its entire
% period of infectiousness (fertility) in a fully susceptible population.
One should be aware that autonomous models with time-averaged coefficients can overestimate or underestimate the true value of $R_0$, see \cite[chapter 7, p.~194]{diekmann2013mathematical} or \cite{Mitchell2017, Wang2008}. Therefore, conclusions drawn from this type of systems should be handled carefully.

Foundational works 
\cite{Bacaer2007,bacaer2012, Bacaer2006} 
established several methods for computing such $R_0$, namely, discretization of the NGO, Floquet multipliers for the linear periodic system, and perturbation expansions about the averaged problem, i.e., $R_0= \rho_0 + \rho_1 \epsilon + \rho_2 \epsilon^2 + \dots$, see \cite[section 3.3]{Bacaer2007}.
In this paper, we introduce a novel numerical method for computing $R_0$ in periodic environments based on the theory of evolution semigroups \cite{chicone1999, inaba2019, Thieme2009}.\footnote{Even though we focus on time-periodic models, we mention that the extension of the definition and generational interpretation of $R_0$ to general time-heterogeneous problems has been discussed in \cite{inaba2012new, inaba2019}, to which we refer the reader for further details. See also \cite{Thieme2009}.} Our approach is easier to implement: indeed, although we deal with an infinite-dimensional generalized eigenvalue problem, the latter is built directly from the nonautonomous population model avoiding the auxiliary constructions of monodromy matrices or the solution of initial value problems. The relevant time-dependent operators (i.e., birth and transition) are discretized via either Fourier or Chebyshev pseudospectral collocation methods: the former uses trigonometric polynomials, the latter algebraic ones. The resulting approach is conceptually straightforward and computationally efficient, not requiring additional zero-finding methods or other complementary numerical tools.

Given the above advantages, we focus the treatment on systems of ODEs: beyond being widely used in epidemiological and ecological modeling, this choice allows us to better highlight the novel contribution coming from the interpretation of the NGO via evolution semigroups. Extending the proposed approach to continuous age/size, spatial, or stage structures is plausible, but certainly introduces additional complexities as relevant models become systems of Partial Differential Equations 
or Delay Equations, 
and thus we reserve to pursue this objective in future work.

The proposed Fourier and Chebyshev schemes require different proofs of convergence that we rigorously provide.
For the Fourier method, the convergence is investigated as in \cite{de2024convergence} by using the well-established spectral approximation theory of \cite{Chatelin1981}. In particular, the norm convergence of the discretized operators is obtained by using interpolation error bounds in the $L^1$-norm \cite[p.~221, Theorem 3.2.9]{Mastroianni}. 
For the Chebyshev method, the convergence is proved by adapting the results of \cite{BredaKuniyaRipollVermiglio2020}. Here, we resort to interpolation error bounds in weighted spaces \cite[p.~271, eq.~4.33 and p.~274, Theorem 4.3.2]{Mastroianni}.
For both methods, we prove that the convergence order of the approximating reproduction number depends on the regularity of the model coefficients, and a spectrally accurate behavior is obtained \cite{Trefethen2000}.
We illustrate the effectiveness of our approach through two case studies: a large-scale multi-group epidemic model with periodic contact rates, and a vector-borne disease model with seasonal recruitment of susceptible vectors. In both examples, we demonstrate that our method yields accurate estimates of $R_0$, validating the results of the theoretical convergence analysis. Moreover, with the second model we also illustrate how the method can be adapted to compute Type-Reproduction Numbers (TRNs) \cite{bacaer2012model, HEESTERBEEK20073,  Inaba2012,  Heesterbeek2003}.

The paper is organized as follows. In \cref{R0perdef}, we introduce a prototype linear population model with time-periodic coefficients. With reference to this model, we illustrate the definition of $R_0$ in the periodic case, and we illustrate the state-of-the-art on the numerical methods for its computation. In \cref{Evolution}, we briefly recall some basic concepts and results on evolution semigroups. In \cref{R0andEvolution}, we discuss the connection of $R_0$ with the theory of evolution semigroups. The section is concluded with the compactness proof for the NGO. In \cref{method}, we present the numerical methods alongside details about their implementation. The convergence proofs are presented in \cref{convanal}. Finally, in \cref{results}, we present numerical results and we discuss the extension of the method to the computation of the TRN.
MATLAB demos are available on GitLab via \url{http://cdlab.uniud.it/software}.

We close this introduction with the following useful definition, where  for a linear operator $A\colon X\to Y$ and Banach spaces $X,Y$, $\|A\|_{Y\leftarrow X}\coloneqq \sup_{\|x\|_{X}=1}\|Ax\|_{Y}$.
\begin{definition}[\cite{aulbach1996, chicone1999}]\label{def:evosys}
A family $\mathcal V\coloneqq \{V(t,s)\ :\ t,s\in \R, \  t\ge s\}$ of bounded linear operators $V(t,s):Z\to Z$ on a Banach space $Z$ is called an \emph{evolutionary system} (or \emph{evolution family}) if
\begin{enumerate}[(i)]
\item $V(s, s)=I_{Z}$ for every $s\in \R$, 
\item $V(t,r)V(r, s)=V(t, s)$ for every $t,r,s\in \R$,\ $t\ge r\ge s$.
\end{enumerate}
$\mathcal V$ is called \emph{continuous} if, for every $z\in Z$, $(t, s)\mapsto V(t, s)z$ is continuous. $\mathcal V$ is called \emph{exponentially bounded} if there exists $C\ge 1$ and $\omega\in \R$ s.t. $\|V(t, s)\|_{Z \leftarrow Z}\le C{\e}^{\omega (t-s)}$, $t,s\in\R$, $t\ge s$,
in which case
\begin{align}
\label{omegaV}\omega(\mathcal V):=\inf \big\{\tilde\omega\in\R\ :\ \exists C\ge 1\text{ s.t. }
\|V(t, s)\|_{Z\leftarrow Z}\le C{\e}^{\tilde \omega (t-s)},\ t,s\in\R,\ t\ge s\big\}
\end{align}
is called \emph{exponential growth bound}.
\end{definition}

\section{Prototype model, $R_0$ and state of the art} \label{R0perdef}
We consider a prototype linear(ized) system of ODEs describing the dynamics of a population evolving in a periodic environment according to linear birth and transition processes (e.g., death, recovery, change of status). 
With reference to this model and following \cite{Bacaer2006, inaba2019, Thieme2009, Wang2008}  and \cite[section 7.9]{diekmann2013mathematical} we recall the definition of the BRN in the time-periodic setting. 

Let $\tau>0$ denote the time-period. For $u(t)\in \mathbb{R}^d$,  $t\ge0$ and $d$ a positive integer, we consider the linear Initial Value Problem (IVP)
\begin{equation}\label{ODEBM}
\left\{\setlength\arraycolsep{0.1em}\begin{array}{rlll} 
u'(t)&=& B(t) u(t) - M(t)u(t), &\quad t \ge s, \\ [0mm]
u(s)&=&u_0 \in \R^d, &\quad s\in\mathbb{R},
\end{array} 
\right.
\end{equation}
where $B, M\colon \R \to \R^{d\times d}$ are $\tau$-periodic functions that account for birth and transition, respectively. When $B\equiv0$, we expect the population to go extinct. Consequently, consider also the \textit{unperturbed} IVP
\begin{equation}\label{ODEM}
\left\{\setlength\arraycolsep{0.1em}\begin{array}{rlll} 
v'(t)&=& - M(t)v(t), &\quad t \ge s, \\ [0mm]
v(s)&=&v_0 \in \R^d, &\quad s\in\mathbb{R},
\end{array} 
\right.
\end{equation}
and let $V(t,s)$ denote the associated Principal Matrix Solution (PMS) at $s$, so that $v(t)=V(t,s)v_{0}$.
In the sequel, for $Z$ either $\R^d$ or $\R^{d\times d}$ and $1\le p\le +\infty$, let
\begin{equation*}
    L_\tau^p(\R, Z)\coloneqq \{\phi\colon \R\to Z\ :\ \phi(t)=\phi(t+\tau)\text{ for a.a. }t\in \R\text{ and } \phi_{\restriction_{[0,\tau]}}\in L^p([0, \tau], Z)\}
\end{equation*}
be equipped with $\|\phi\|_{L_\tau^p(\R, Z)}\coloneqq \|\phi_{\restriction_{[0, \tau]}}\|_{L^p([0, \tau], Z)}$.\footnote{In the sequel we will use the subscript $\tau$ to denote spaces of $\tau$-periodic functions, e.g., $\Lip_{\tau}^s$ for Lipschitz continuous functions with $s$-th Lipschitz continuous derivative, $s\ge0$.} 
We assume the following \cite{inaba2019, Thieme2009}.
\begin{assumption}\label{assBM}
\hspace{0mm}
\begin{enumerate}
\item $B\in L^\infty_\tau(\R,\R^{d\times d})$ is nonnegative;
\item $M\in L^\infty_\tau(\R,\R^{d\times d})$ is s.t. $\mathcal V\coloneqq \{V(t,s)\ :\ t,s\in \R,\  t\ge s\}$ is a continuous evolutionary system of nonnegative
operators on $\R^d$ with $\omega(\mathcal V)<0$.\footnote{The nonnegativity of $V(t,s)$
and $\omega(\mathcal V)<0$ are  ensured, e.g., if $-M(t)$ is nonzero, Metzler (i.e., all of its off-diagonal entries are nonnegative) and with nonpositive entries on the diagonal for a.a. $t\in \R$ \cite{Inaba2012b}. Note that in the theory of positive operators on real ordered Banach ($L^p$) spaces $X$, 
a nonzero bounded linear operator $A\colon X\to X$ is called \emph{positive} if it maps the positive cone $X_+:=\{\psi\in X\ :\ \psi \ge 0\}$ into itself \cite[p. 225]{Schaefer}.  In finite dimension, instead, a matrix is called \emph{positive} if all of its entries are positive and \emph{nonnegative} if all of its entries are nonnegative. For a comprehensive discussion see \cite{Inaba2012b}.}
\end{enumerate}
\end{assumption}
The $\tau$-periodicity of $M$ implies
\begin{equation}\label{periodicV}
V(t+\tau,s+\tau)=V(t,s),\quad  t,s\in \R,\quad t\ge s,
\end{equation}
and thus $\omega(\mathcal V)<0$ iff $\rho\left(V(t+\tau,t)\right)<1$ holds for a.a. $t\in \R$ \cite[Proposition 5.6]{Thieme2009}, where $\rho(A)$ denotes the spectral radius of a matrix (or in general a linear operator) $A$.

The NGO for \eqref{ODEBM} is the positive operator $\mathcal K\colon L^1_\tau(\R, \R^d)\to L^1_\tau(\R, \R^d)$ defined as 
\begin{equation} \label{KBC}
(\mathcal K\psi)(t)\coloneqq B(t)\int_{0}^\infty V(t, t-s)\psi(t-s) \dd s,\qquad t\in [0, \tau],
\end{equation}
and, following \cite{diekmann1990}, the BRN is characterized as its  spectral radius:
\begin{equation}\label{R0per}
R_0\coloneqq \rho(\mathcal K).
\end{equation}
In \eqref{KBC}, $\psi$ represents the density
of the distribution w.r.t. the time-at-birth of the current generation of individuals \cite[section 7.9]{diekmann2013mathematical}.
In \cref{Evolution}, we prove that the operator $\mathcal K$ in \eqref{KBC} is compact. Hence, if $R_0>0$, then the Krein--Rutman Theorem \cite{krein1948} ensures that $R_0$ is a dominant real eigenvalue with an associated nonnegative eigenfunction. Note also that from \eqref{KBC}-\eqref{R0per} one can characterize $R_0$ as the unique positive solution of the nonlinear scalar equation 
\begin{equation}\label{nonlineareq}
\rho\left( U^{(\lambda)}(\tau, 0) \right)=1,
\end{equation}
where $U^{(\lambda)}(\tau, 0) \in \R^{d\times d}$ is the monodromy operator associated with the linear ODE
\begin{equation*} 
u'(t)= \left(\frac{1}{\lambda}B(t)-M(t)\right) u(t),
\end{equation*}
i.e., the PMS at $0$ after one period. We refer to \cite{Bacaer2007, Wang2008},  \cite[section 7.2]{Bacaer2021b} and \cite[section 8]{diekmann2013mathematical} for further details.

Observe that, in the periodic case, the computation of $R_0$ leads to deal with either the solution of the infinite-dimensional eigenvalue problem 
\begin{equation}\label{ngoeig}
\mathcal K\psi =\lambda \psi,\qquad (\lambda, \psi)\in \R\times L^1_\tau(\R, \R^d),
\end{equation} 
for positive $\lambda$, or with the solution of the nonlinear equation \eqref{nonlineareq}. The solution of both is in general  not attainable analytically. Methods based on the discretization of \eqref{ngoeig} have been proposed in \cite{Bacaer2007,posny2014}, while methods based on the numerical solution of \eqref{nonlineareq} through the combination of ODE solvers and dichotomy methods have been proposed in \cite{Bacaer2007,Wang2008} (see \cite{Mitchell2017} for a review of all these approaches).
As for the methods based on the discretization of \eqref{ngoeig}, one of the main drawbacks is that one explicitly needs to derive the NGO \eqref{KBC}, thus compute $V(t,s)$ and approximate the integral over the halfline.
Methods based on the numerical solution of \eqref{nonlineareq} are typically more flexible, as they do not require to explicitly derive the NGO. Yet, they require to numerically compute from scratch at each iteration an approximation of the monodromy operator $U^{(\lambda)}(\tau, 0)$ in \eqref{nonlineareq}, and then to solve a nonlinear equation, which in turn requires to choose an appropriate starting point for the iteration. For all these reasons, we propose in \cref{R0andEvolution} an alternative description of the NGO, which is more convenient and effective in view of numerically approximating $R_0$. The reformulation is based on exploiting the theory of evolution semigroups, the basics of which are illustrated in the forthcoming \cref{Evolution} (first recall \Cref{def:evosys}).

\section{Evolution semigroups}
\label{Evolution}
We introduce the concept of evolution semigroup associated with an evolutionary system \cite{aulbach1996, chicone1999, inaba2019, Thieme2009, vanminh1994}. We first recall basic definitions and results on semigroups of linear operators \cite{EngelNagel2000}, and then we adapt them to the time-periodic case \eqref{periodicV}.
\begin{definition}
A family $\mathcal T\coloneqq \{T(s)\ :\ s\ge 0\}$ of bounded linear operators $T(s)\colon X\to X$ on a Banach space $X$ is called a \emph{semigroup} if:
\begin{enumerate}[(i)]
\item $T(0)=I_{X}$, 
\item $T(r)T(s)=T(r+s)$ for every $r,s\in\R$, $r,s \ge 0$.
\end{enumerate}
$\mathcal T$ is called \emph{strongly continuous} or $C_{0}$ if  $\lim_{s\to 0^+}T(s)\phi=\phi$ for every $\phi\in X$. 
\end{definition}
\begin{theorem}
Let $\mathcal T\coloneqq \{T(s)\}_{s\ge 0}$ be a $C_0$-semigroup on $X$. Then   
there exists $C\ge 1$ and $\omega\in \R$ s.t. $\|T(s)\|_{X \leftarrow X}\le C{\e}^{\omega s}$, $s\ge 0$.
\end{theorem}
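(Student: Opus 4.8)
The plan is to follow the classical two-step argument for $C_0$-semigroups (see, e.g., \cite{EngelNagel2000}): first prove \emph{local} uniform boundedness of $\mathcal T$ near $s=0$ via the uniform boundedness principle, then bootstrap to a global exponential estimate using the semigroup law. For the first step I would argue by contradiction. Suppose there is no pair $(t_0,M)$ with $t_0>0$, $M\ge 1$ and $\|T(s)\|_{X\leftarrow X}\le M$ for all $s\in[0,t_0]$. Choosing $t_0=1/n$ and $M=n$ produces a sequence $s_n\to 0$ with $\|T(s_n)\|_{X\leftarrow X}\to\infty$. By the uniform boundedness principle applied to $\{T(s_n)\}_n\subset\mathcal B(X)$, there exists $\phi\in X$ with $\sup_n\|T(s_n)\phi\|_X=\infty$. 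But strong continuity gives $T(s_n)\phi\to T(0)\phi=\phi$ in $X$, so $\{T(s_n)\phi\}_n$ is convergent, hence bounded — a contradiction. Thus such $t_0$ and $M$ exist, and $M\ge\|T(0)\|_{X\leftarrow X}=\|I_X\|_{X\leftarrow X}=1$.

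For the second step I would extend the bound from $[0,t_0]$ to all of $[0,\infty)$ by iterating property (ii). Given $s\ge 0$, write $s=nt_0+r$ with $n\in\mathbb N\cup\{0\}$ and $r\in[0,t_0)$; then $T(s)=T(t_0)^nT(r)$, whence
\[
\|T(s)\|_{X\leftarrow X}\le\|T(t_0)\|_{X\leftarrow X}^{\,n}\,\|T(r)\|_{X\leftarrow X}\le M^{\,n+1}.
\]
Since $M\ge 1$ we have $\log M\ge 0$, and $n\le s/t_0$ gives $M^{\,n+1}=M\,{\e}^{\,n\log M}\le M\,{\e}^{(s/t_0)\log M}$. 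Setting $C\coloneqq M\ge 1$ and $\omega\coloneqq(\log M)/t_0\in\R$ (with $\omega=0$ when $M=1$) yields $\|T(s)\|_{X\leftarrow X}\le C{\e}^{\omega s}$ for every $s\ge 0$, which is the assertion.

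The only genuinely delicate point is the local boundedness step: strong continuity provides, for each \emph{fixed} $\phi\in X$, only a neighbourhood of $0$ — a priori $\phi$-dependent — on which $s\mapsto T(s)\phi$ stays close to $\phi$, and turning this pointwise control into a uniform bound on the operator norms $\|T(s)\|_{X\leftarrow X}$ over a common interval is exactly what the uniform boundedness principle supplies (the completeness of $X$ being used here). The subsequent extension to all $s\ge 0$ is a routine computation with the semigroup identity, needing nothing more than $M\ge 1$.
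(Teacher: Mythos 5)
Your proof is correct and is precisely the classical argument (local uniform boundedness near $s=0$ via the uniform boundedness principle, then iteration of the semigroup law), which is the proof given in the reference \cite{EngelNagel2000} that the paper cites; the paper itself states this theorem without proof as a recalled standard result. Both steps, including the delicate passage from strong continuity to a uniform operator-norm bound on a common interval, are handled correctly.
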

\begin{definition}
For $\mathcal T\coloneqq \{T(s)\}_{s\ge 0}$ a $C_0$-semigroup on $X$
\begin{align*}
\label{omegaT}\omega(\mathcal T)\coloneqq \inf \big\{\tilde\omega\in \R \ :\ &\exists C\ge 1\text{ s.t. }
\|T(s)\|_{X\leftarrow X}\le C{\e}^{\tilde \omega s},\ s\in\R,\ s\ge 0\big\}
\end{align*}
is called \emph{exponential growth bound}.
\end{definition}
\begin{definition}
Let $\mathcal T=\{T(s)\}_{s\ge 0}$ be a $C_0$-semigroup on  $X$. The linear operator $\mathcal A\colon D(\mathcal A)\subset X\to X$ defined as
\begin{equation*}
\mathcal A \phi\coloneqq \lim_{s\to 0^+}\cfrac{T(s)\phi-\phi}{s},\qquad \phi\in D(\mathcal A)\coloneqq \left\{\phi \in X\ :\ \lim_{s\to 0^+}\cfrac{T(s)\phi-\phi}{s}\text{ exists}\right\}, 
\end{equation*}
is called the \emph{infinitesimal generator} of $\mathcal T$.
\end{definition}

We now consider a special semigroup associated to an evolutionary system $\mathcal V\coloneqq \{V(t,s)\ :\ t,s\in \R,\  t\ge s\}$ on a Banach space $Z$ along with its main properties.
\begin{definition}
The semigroup $\mathcal T\coloneqq \left\{T(s)\ :\ s\ge 0\right\}$ on $X\coloneqq L^1(\R, Z)$ given by $T(s):X\to X$ with $\left(T(s)\phi\right)(t)\coloneqq V(t, t-s)\phi(t-s),\ t\in \R$, $s\ge 0$, is called the \emph{evolution semigroup} of $\mathcal V$.
\end{definition}
\begin{theorem}[{\cite[Proposition 1]{aulbach1996}, \cite[Lemma 4]{inaba2019}, \cite[Lemma 5.1]{Thieme2009}}]\label{samebound}
Let $\mathcal T$ be the evolution semigroup on $X\coloneqq L^1(\R, Z)$ associated to an evolutionary system $\mathcal V$ on a Banach space $Z$. If $\mathcal V$ is continuous and exponentially bounded, then $\mathcal T$ is $C_0$ and $\|T(s) \|_{X\leftarrow X}=\sup_{t\in \R}\|V(t+s,t)\|_{Z\leftarrow Z}$. Moreover, $s(\mathcal A)=\omega(\mathcal T)=\omega(\mathcal V)$ holds for $\mathcal A$ the infinitesimal generator of $\mathcal T$, where $s(\mathcal A)\coloneqq \sup_{\lambda \in \sigma(\mathcal A)} \Re(\lambda)$.
\end{theorem}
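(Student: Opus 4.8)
My plan is to verify, in this order, the semigroup laws and boundedness, the norm identity, strong continuity, and finally the equality of growth and spectral bounds; only the last step is substantial, and for it I would rely on the established theory of evolution semigroups rather than reprove everything.

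\emph{Semigroup laws and norm identity.} First I would note that $T(0)=I_X$ follows from $V(t,t)=I_Z$, while the cocycle law $V(t,r)V(r,s)=V(t,s)$ gives, for $r,s\ge0$, $(T(r)T(s)\phi)(t)=V(t,t-r)V(t-r,t-r-s)\phi(t-r-s)=V(t,t-r-s)\phi(t-r-s)=(T(r+s)\phi)(t)$. For the norm, the substitution $u=t-s$ and translation invariance of Lebesgue measure would yield
\[
\|T(s)\phi\|_X=\int_\R\|V(u+s,u)\phi(u)\|_Z\dd u\le\Bigl(\sup_{u\in\R}\|V(u+s,u)\|_{Z\leftarrow Z}\Bigr)\|\phi\|_X,
\]
hence the inequality ``$\le$'' in the claimed identity and, in particular, exponential boundedness of $\mathcal T$ inherited from $\mathcal V$. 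For the reverse inequality I would fix $t_0\in\R$ and $z_0\in Z$ with $\|z_0\|_Z=1$ and test against the normalized concentrations $\phi_n\coloneqq n\,\mathbbm{1}_{[t_0,t_0+1/n]}z_0$: since $u\mapsto V(u+s,u)z_0$ is continuous (because $\mathcal V$ is), $\|T(s)\phi_n\|_X=n\int_{t_0}^{t_0+1/n}\|V(u+s,u)z_0\|_Z\dd u\to\|V(t_0+s,t_0)z_0\|_Z$, and taking the supremum over $z_0$ and then over $t_0$ gives ``$\ge$''.

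\emph{Strong continuity.} Using that $C_c(\R,Z)$ is dense in $X$ and that, by the previous step, $\sup_{s\in[0,1]}\|T(s)\|_{X\leftarrow X}<\infty$, a standard $3\varepsilon$-argument reduces the claim to showing $\|T(s)\phi-\phi\|_X\to0$ as $s\to0^+$ for $\phi\in C_c(\R,Z)$. If $\supp\phi\subseteq[-R,R]$, then $(T(s)\phi-\phi)(t)=0$ for $t\notin[-R,R+1]$ and $s\in[0,1]$, so
\[
\|T(s)\phi-\phi\|_X\le(2R+1)\sup_{t\in[-R,R+1]}\bigl\|V(t,t-s)\bigl(\phi(t-s)-\phi(t)\bigr)+\bigl(V(t,t-s)-I_Z\bigr)\phi(t)\bigr\|_Z.
\]
I would bound the first summand by the uniform operator bound on $\{V(t,t-s):t\in\R,\ s\in[0,1]\}$ times the modulus of uniform continuity of $\phi$, and the second by covering the compact set $\phi([-R,R+1])$ with finitely many small balls and combining uniform continuity of $(t,s)\mapsto V(t,t-s)z$ on $[-R,R+1]\times[0,1]$ for each fixed $z$ with the same operator bound; both tend to $0$ as $s\to0^+$, so $\mathcal T$ is $C_0$.

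\emph{Equality of the bounds.} From the norm identity, the estimate $\|V(t,s)\|_{Z\leftarrow Z}\le C\e^{\tilde\omega(t-s)}$ for all $t\ge s$ is, via $r=t-s$, equivalent to $\|T(r)\|_{X\leftarrow X}\le C\e^{\tilde\omega r}$ for all $r\ge0$; the sets of admissible exponents $\tilde\omega$ in the definitions of $\omega(\mathcal V)$ and $\omega(\mathcal T)$ therefore coincide, whence $\omega(\mathcal T)=\omega(\mathcal V)$. It remains to get $s(\mathcal A)=\omega(\mathcal T)$: since $s(\mathcal A)\le\omega(\mathcal T)$ for every $C_0$-semigroup, the content is the reverse inequality, which fails for generic $C_0$-semigroups. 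Here I would use the modulation operators $M_\theta\colon X\to X$, $(M_\theta\phi)(t)\coloneqq\e^{\mathrm{i}\theta t}\phi(t)$, which are isometric isomorphisms satisfying $M_\theta T(s)M_\theta^{-1}=\e^{\mathrm{i}\theta s}T(s)$ and, after differentiating at $s=0$, $M_\theta\mathcal AM_\theta^{-1}=\mathcal A+\mathrm{i}\theta$; consequently $\sigma(T(s))\setminus\{0\}$ is rotation-invariant and $\sigma(\mathcal A)$ is invariant under imaginary translations. Invoking the spectral mapping theorem for evolution semigroups (which rests on this rotational symmetry and the characterization of hyperbolicity of $T(1)$ in terms of the exponential dichotomy of $\mathcal V$; cf.\ \cite{chicone1999, aulbach1996, inaba2019, Thieme2009}) gives $\sigma(T(s))\setminus\{0\}=\e^{s\sigma(\mathcal A)}$, and combining with the standard identity $r(T(s))=\e^{s\omega(\mathcal T)}$ yields $\e^{s\omega(\mathcal T)}=r(T(s))=\sup\{|\mu|:\mu\in\sigma(T(s))\}=\e^{s\,s(\mathcal A)}$, i.e.\ $s(\mathcal A)=\omega(\mathcal T)$. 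This last ingredient is the main obstacle: a general $C_0$-semigroup only satisfies $\e^{s\sigma(\mathcal A)}\subseteq\sigma(T(s))$, so its spectral bound may be strictly below the growth bound, and the reverse inclusion is precisely the special feature of evolution semigroups that I would quote from the literature rather than establish from scratch.
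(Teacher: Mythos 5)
Your proof is correct. Note, however, that the paper does not actually prove this theorem: it imports it verbatim from the cited references (Aulbach--Van Minh, Inaba, Thieme), so there is no internal argument to compare against. What you have written is a sound self-contained derivation of everything elementary --- the semigroup laws from the cocycle identity, the norm identity $\|T(s)\|_{X\leftarrow X}=\sup_{t}\|V(t+s,t)\|_{Z\leftarrow Z}$ via translation invariance plus concentrated test functions $n\,\mathbbm{1}_{[t_0,t_0+1/n]}z_0$, strong continuity by density of $C_c(\R,Z)$ and a compactness/uniform-continuity argument, and the equivalence of the exponent sets giving $\omega(\mathcal T)=\omega(\mathcal V)$. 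You also correctly isolate the one genuinely non-elementary ingredient, $s(\mathcal A)=\omega(\mathcal T)$, which cannot follow from general semigroup theory and must come from the spectral mapping theorem for evolution semigroups; delegating that to the literature is exactly what the paper itself does (see its remark following the periodic version, which cites Chicone--Latushkin, Theorem 3.13). Two very minor points you gloss over, neither a gap: the measurability of $t\mapsto V(t,t-s)\phi(t-s)$ for $\phi\in L^1$ (routine via simple functions and the strong continuity of $\mathcal V$), and the fact that the supremum over $t_0$ and $z_0$ of the limits of $\|T(s)\phi_n\|_X$ is dominated by $\|T(s)\|_{X\leftarrow X}$, which is what closes the reverse inequality.
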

In the time-periodic case \eqref{periodicV}, Theorem \ref{samebound} holds almost unchanged for $X\coloneqq L^1_\tau(\R, Z)$.
\begin{theorem}[{\cite[Proposition 5.5, Lemma 5.8 and Proposition A.2]{Thieme2009}}]\label{sameboundper}
Let $\mathcal T$ be the evolution semigroup on $X\coloneqq L_{\tau}^1(\R, Z)$ associated to a $\tau$-periodic evolutionary system $\mathcal V$ on a Banach space $Z$. If $\mathcal V$ is continuous, then it is exponentially bounded, $\mathcal T$ is $C_0$ and $\|T(s) \|_{X\leftarrow X}=\sup_{t\in \R}\|V(t+s,t)\|_{Z\leftarrow Z}$. Moreover, $s(\mathcal A)=\omega(\mathcal T)=\omega(\mathcal V)$ holds for $\mathcal A$ the infinitesimal generator of $\mathcal T$.
\end{theorem}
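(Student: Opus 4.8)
The plan is to prove the four assertions in turn, exploiting that \cref{samebound} cannot be quoted verbatim (here the ambient space is $X\coloneqq L^{1}_{\tau}(\R,Z)$, not $L^{1}(\R,Z)$) but that its underlying estimates transcribe to the periodic setting almost unchanged, so that only the exponential boundedness of $\mathcal V$ and the spectral identity $s(\mathcal A)=\omega(\mathcal T)$ require genuinely new input. I would begin with exponential boundedness, which is the one step where $\tau$-periodicity is essential (in \cref{samebound} it is part of the hypotheses). Continuity of $\mathcal V$ makes $(t,s)\mapsto V(t,s)z$ continuous for each $z\in Z$, hence bounded on the compact strip $\{(t,s):0\le t-s\le\tau,\ 0\le s\le\tau\}$, so by the uniform boundedness principle $c\coloneqq\sup\{\|V(t,s)\|_{Z\leftarrow Z}:0\le t-s\le\tau,\ 0\le s\le\tau\}<\infty$, and by \eqref{periodicV} the same bound holds on the whole strip $\{0\le t-s\le\tau\}$. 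For general $t\ge s$, writing $t-s=n\tau+\varrho$ with $n\in\N$ and $0\le\varrho<\tau$, factoring $V(t,s)$ as $V(t,s+n\tau)$ times $n$ consecutive one-period maps and applying \eqref{periodicV} to each factor, a routine telescoping yields $\|V(t,s)\|_{Z\leftarrow Z}\le c\,q^{\,n}$ with $q\coloneqq\sup_{a\in\R}\|V(a+\tau,a)\|_{Z\leftarrow Z}\le c$; this is an exponential bound, and incidentally $\omega(\mathcal V)\le\tau^{-1}\ln q$.

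I would then prove the norm identity. That $T(s)$ leaves $X$ invariant follows from \eqref{periodicV} and $\tau$-periodicity of $\phi$, since $(T(s)\phi)(t+\tau)=V(t+\tau,t+\tau-s)\phi(t+\tau-s)=V(t,t-s)\phi(t-s)=(T(s)\phi)(t)$ (strong measurability of $t\mapsto(T(s)\phi)(t)$ following from continuity of $\mathcal V$ after approximating $\phi$ by continuous periodic functions). The bound ``$\le$'' comes from estimating $\|T(s)\phi\|_{X}=\int_{0}^{\tau}\|V(t,t-s)\phi(t-s)\|_{Z}\dd t$ pointwise by $\sup_{t}\|V(t,t-s)\|_{Z\leftarrow Z}\int_{0}^{\tau}\|\phi(t-s)\|_{Z}\dd t$ and using that $t\mapsto\|\phi(t)\|_{Z}$ is $\tau$-periodic (so the last integral equals $\|\phi\|_{X}$) together with $\sup_{t}\|V(t,t-s)\|_{Z\leftarrow Z}=\sup_{t\in\R}\|V(t+s,t)\|_{Z\leftarrow Z}$. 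For ``$\ge$'', given $\epsilon>0$ I would select (exploiting $\tau$-periodicity in $t$ of $\|V(t+s,t)\|_{Z\leftarrow Z}$) a point $t_{0}$ and a unit vector $z_{0}\in Z$ with $\|V(t_{0}+s,t_{0})z_{0}\|_{Z}>\sup_{t\in\R}\|V(t+s,t)\|_{Z\leftarrow Z}-\epsilon$; continuity of $\mathcal V$ keeps this valid for $t$ in some interval $(t_{0}-\eta,t_{0}+\eta)$, and the $\tau$-periodic function equal to $(2\eta)^{-1}z_{0}$ on that interval (mod $\tau$) and $0$ elsewhere has unit $X$-norm while $\|T(s)\phi\|_{X}\ge\sup_{t\in\R}\|V(t+s,t)\|_{Z\leftarrow Z}-\epsilon$. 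Letting $\epsilon\downarrow0$ proves the identity.

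Strong continuity of $\mathcal T$ is then routine: on the dense subspace of continuous $\tau$-periodic functions I would write $(T(s)\phi)(t)-\phi(t)=V(t,t-s)\bigl(\phi(t-s)-\phi(t)\bigr)+\bigl(V(t,t-s)-I_{Z}\bigr)\phi(t)$ and let $s\downarrow0$, using uniform continuity of $\phi$, the local bound on $\|V\|_{Z\leftarrow Z}$ near the diagonal, and $V(t,t-s)z\to z$ (which follows from $V(s,s)=I_{Z}$ and continuity of $\mathcal V$); density and the uniform bound $\sup_{0\le s\le1}\|T(s)\|_{X\leftarrow X}<\infty$ (already available) extend the limit to all of $X$. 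The equality $\omega(\mathcal T)=\omega(\mathcal V)$ then follows at once from the norm identity, since $\|T(s)\|_{X\leftarrow X}\le C{\e}^{\tilde\omega s}$ for all $s\ge0$ is equivalent, by re-indexing and taking suprema over $t$, to $\|V(a,b)\|_{Z\leftarrow Z}\le C{\e}^{\tilde\omega(a-b)}$ for all $a\ge b$.

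The remaining identity $s(\mathcal A)=\omega(\mathcal T)$ is the only genuinely hard point, and the one I expect to be the main obstacle. The inequality $s(\mathcal A)\le\omega(\mathcal T)$ holds for every $C_{0}$-semigroup; for the reverse I would combine the always-true relation $r(T(t))={\e}^{t\omega(\mathcal T)}$ ($t>0$) with the spectral mapping theorem $\sigma(T(t))\setminus\{0\}={\e}^{t\sigma(\mathcal A)}$ valid for evolution semigroups, which together give ${\e}^{t\,s(\mathcal A)}=r(T(t))={\e}^{t\omega(\mathcal T)}$ and hence the claim. In the $\tau$-periodic case this spectral mapping can be made concrete at $t=\tau$: by \eqref{periodicV} and $\tau$-periodicity of $\phi$ one has $(T(\tau)\phi)(t)=V(t,t-\tau)\phi(t)$, so $T(\tau)$ is the multiplication operator on $X$ induced by the bounded family of period maps $t\mapsto V(t,t-\tau)$, whence $r(T(\tau))=\sup_{t\in\R}\rho\bigl(V(t,t-\tau)\bigr)=\rho\bigl(V(\tau,0)\bigr)$ (using that $\rho(V(t+\tau,t))$ does not depend on $t$, cf.\ the discussion around \eqref{periodicV}); the nontrivial ingredient is precisely the identification of $\sigma(\mathcal A)$ with a logarithm of $\sigma(T(\tau))$, equivalently the invariance of $\sigma(\mathcal A)$ under vertical translation by $2\pi i/\tau$, for which I would invoke \cite[Proposition~A.2]{Thieme2009} (see also \cite{chicone1999, inaba2019}). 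Everything else is, as indicated, \cref{samebound} re-run on $L^{1}_{\tau}(\R,Z)$.
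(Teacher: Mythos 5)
Your argument is sound, but note that the paper offers no proof of this statement at all: it is quoted verbatim from \cite[Proposition 5.5, Lemma 5.8 and Proposition A.2]{Thieme2009}, with the one genuinely deep ingredient --- the spectral mapping theorem for evolution semigroups, which upgrades $s(\mathcal A)\le\omega(\mathcal T)$ to an equality --- deferred in Remark 3.6 to \cite[p.~65, Theorem 3.13]{chicone1999}. What you have done is reconstruct the elementary part of that citation from scratch, and correctly: the derivation of exponential boundedness from continuity plus \eqref{periodicV} via uniform boundedness on one compact strip and telescoping of period maps, the two-sided proof of the norm identity (with the localized indicator test function for the lower bound), the density argument for strong continuity, and the observation that $\omega(\mathcal T)=\omega(\mathcal V)$ is an immediate corollary of the norm identity are all standard and correct, and you rightly isolate $s(\mathcal A)=\omega(\mathcal T)$ as the step that cannot be obtained by these means and must be imported --- which is exactly what the paper itself does. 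Two small points worth tightening if you were to write this out in full: the identity $r(T(\tau))=\sup_t\rho\bigl(V(t,t-\tau)\bigr)$ is not a generic fact about operator-valued multiplication operators, but it does follow here from $T(\tau)^n=T(n\tau)$, the norm identity, and the $t$-independence of $\rho\bigl(V(t+\tau,t)\bigr)$ noted after \eqref{periodicV}; and in the lower bound for $\|T(s)\|_{X\leftarrow X}$ you should make explicit that lower semicontinuity of $t\mapsto\|V(t+s,t)z_0\|_Z$ (from strong continuity of $\mathcal V$) is what guarantees the interval $(t_0-\eta,t_0+\eta)$ has positive measure on which the near-optimal bound persists. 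Neither affects the validity of the sketch.
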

\begin{remark}
Note that for general $C_0$-semigroups $\mathcal T$ and their infinitesimal generators $\mathcal A$ only $s(\mathcal A)\le \omega (\mathcal T)$ holds and the inequality may be strict \cite[section 4.2]{EngelNagel2000}. However, for evolution semigroups the equality always holds thanks to the spectral mapping theorem \cite[p.~65, Theorem 3.13]{chicone1999}.
\end{remark}

\section{$R_0$ from evolution semigroups}
\label{R0andEvolution}
Following \cite{Inaba2012b, inaba2019, Thieme2009} we now illustrate the connection of the theory presented in \cref{R0perdef} with that of evolution semigroups. Let us consider the $\tau$-periodic evolutionary system $\mathcal V$ corresponding to \eqref{ODEM} with \eqref{periodicV}. Under \Cref{assBM}, \Cref{sameboundper} ensures that the associated evolution semigroup $\mathcal T$ on $X\coloneqq L^1_\tau(\R, \R^d)$ is a $C_0$-semigroup of positive operators. Let $Y\coloneqq AC_\tau(\R, \R^d)$ be the space of $\tau$-periodic absolutely continuous functions endowed with the norm $\|\phi\|_Y\coloneqq \|\phi\|_X+\|\phi'\|_X$ and let 
$\mathcal M\colon Y \to  X$ 
be defined as
\begin{equation}\label{Mper}
\left(\mathcal M \phi\right)(t)\coloneqq \phi'(t)+M(t)\phi(t), \quad t\in \mathbb{\R}.
\end{equation}
One can show that $-\mathcal M $ is the infinitesimal generator of $\mathcal T$, see \cite[section 2.2.1]{chicone1999}.
In particular, under \Cref{assBM}, \Cref{sameboundper} ensures that $s(-\mathcal M)=\omega(\mathcal V)<0$. Consequently, $\mathcal M$ is invertible with bounded inverse, and from standard results in semigroup theory\footnote{For a $C_0$-semigroup $\mathcal T:=\{T(s)\}_{s\ge 0}$ with generator $\mathcal A$ on a Banach space $X$ satisfying $\omega(\mathcal T)=s(\mathcal A)$, one has
\begin{equation*}
(\lambda-\mathcal A)^{-1}\psi=\int_0^\infty {\e}^{-\lambda s} T(s)\psi\dd s,\quad \psi\in X,
\end{equation*}
for every $\lambda\in \C$ with $\Re(\lambda)> s(\mathcal A)$.} \cite[section 2.1, Theorem 1.10]{EngelNagel2000}, we get \cite{Thieme2009} 
\begin{equation*}
 (\mathcal M^{-1}\psi)(t) = \int_0^\infty \left(T(s)\psi\right)(t) \dd s=\int_0^\infty V(t, t-s) \psi(t-s) \dd s,\quad \psi\in X.
\end{equation*}
Hence, by considering the positive linear bounded operator $\mathcal B\colon X\to X$ defined as
\begin{equation}\label{Bper}
\left(\mathcal B \phi\right)(t)\coloneqq B(t)\phi(t), \quad t\in \mathbb{\R},
\end{equation}
the NGO in \eqref{KBC} can be equivalently obtained as \cite{Inaba2012b, inaba2019, Thieme2009}
\begin{equation}\label{KBM-1}
\mathcal K=\mathcal B\mathcal M^{-1}. 
\end{equation}
The following result of compactness is then an immediate consequence.
\begin{theorem}\label{HZcompact}
The NGO $\mathcal K$ in \eqref{KBM-1} is compact, $0\in \sigma(\mathcal K)$ and $\sigma(\mathcal K)\setminus \{0\}$ is a bounded set that consists of eigenvalues only with finite algebraic multiplicity, with no accumulation points except possibly for 0. In addition, if $R_0$ in \eqref{R0per} is positive, then it is a dominant real eigenvalue with an associated real nonnegative eigenfunction.
\end{theorem}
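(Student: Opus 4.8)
The plan is to deduce every assertion directly from the factorization $\mathcal K=\mathcal B\mathcal M^{-1}$ in \eqref{KBM-1}, using the mapping properties of the two factors together with two classical tools: the Riesz--Schauder spectral theory of compact operators and the Krein--Rutman theorem. First I would settle compactness. By the discussion preceding the statement, $\mathcal M\colon Y\to X$ is boundedly invertible, so $\mathcal M^{-1}$ maps $X$ continuously into $Y=AC_\tau(\R,\R^d)$ with its graph norm $\|\cdot\|_Y$. The substantive point is that the canonical embedding $\iota\colon Y\hookrightarrow X$ is compact. I would prove this via the Riesz--Fréchet--Kolmogorov precompactness criterion in $L^1$ over the compact domain $[0,\tau]$ (equivalently, over the circle $\R/\tau\Z$): a $\|\cdot\|_Y$-bounded family $S\subset Y$ is bounded in $L^1_\tau(\R,\R^d)$ and, writing $\phi(t+h)-\phi(t)=\int_t^{t+h}\phi'(r)\dd r$, satisfies $\sup_{\phi\in S}\|\phi(\cdot+h)-\phi\|_{L^1_\tau(\R,\R^d)}\le |h|\sup_{\phi\in S}\|\phi'\|_{L^1_\tau(\R,\R^d)}\to 0$ as $h\to 0$, hence $S$ is precompact in $X$. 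Then $\iota\circ\mathcal M^{-1}\colon X\to X$ is compact (a compact operator composed with a bounded one), and left-composition with the bounded multiplication operator $\mathcal B$ of \eqref{Bper} gives compactness of $\mathcal K=\mathcal B\mathcal M^{-1}$.

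The remaining claims then follow almost for free. Since $X$ is infinite-dimensional, a compact operator on $X$ is not boundedly invertible, so $0\in\sigma(\mathcal K)$; and $\sigma(\mathcal K)\subseteq\{\lambda\in\C\ :\ |\lambda|\le\|\mathcal K\|\}$ is bounded, while by Riesz--Schauder $\sigma(\mathcal K)\setminus\{0\}$ consists of eigenvalues of finite algebraic multiplicity with no accumulation point other than possibly $0$. For the last assertion I would note that $\mathcal K$ is a positive compact operator on the Banach lattice $X=L^1_\tau(\R,\R^d)$: by \Cref{assBM}, $\mathcal B$ is positive since $B\ge 0$, and $\mathcal M^{-1}$ is positive since $(\mathcal M^{-1}\psi)(t)=\int_0^\infty V(t,t-s)\psi(t-s)\dd s$ with $V(t,s)\ge 0$. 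Hence, if $R_0=\rho(\mathcal K)>0$, the Krein--Rutman theorem yields that $R_0$ is an eigenvalue of $\mathcal K$ with an eigenfunction in the positive cone, i.e., real and nonnegative; dominance is then merely the definition of the spectral radius combined with $\rho(\mathcal K)\in\sigma(\mathcal K)$.

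I do not expect a genuine obstacle: once \eqref{KBM-1} is available the theorem is close to immediate. The only step carrying real content is the compact embedding $AC_\tau(\R,\R^d)\hookrightarrow L^1_\tau(\R,\R^d)$, for which the translation-equicontinuity estimate above is the crux, and the only mild care needed is to invoke the version of the Krein--Rutman theorem valid for compact positive operators with respect to a total (not necessarily solid) cone, which applies here since the positive cone of $L^1_\tau(\R,\R^d)$ is total but not solid.
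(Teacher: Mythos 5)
Your proposal is correct and follows essentially the same route as the paper: factor $\mathcal K=\mathcal B\mathcal M^{-1}$, show $\mathcal M^{-1}\colon X\to Y$ is bounded, use the compactness of the embedding $Y\hookrightarrow X$ (which the paper cites from Brezis and you reprove via Riesz--Fr\'echet--Kolmogorov), then conclude with Riesz--Schauder theory and the Krein--Rutman theorem for a total cone. The only cosmetic difference is that the paper makes the $X\to Y$ boundedness of $\mathcal M^{-1}$ explicit via the identity $(\mathcal M^{-1}\psi)'=\psi-M\mathcal M^{-1}\psi$, which you invoke as a standard graph-norm fact.
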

\begin{proof}
Since $\mathcal B$ in \eqref{Bper} is bounded, it is enough to show that $\mathcal M^{-1}$ is compact. Observe that $\Range(\mathcal M^{-1})=Y$ and that $\mathcal M^{-1}\colon X\to Y$ is bounded. Indeed, by letting $\phi=\mathcal{M}^{-1}\psi$ in \eqref{Mper} for $\psi\in X$ one first gets that $(\mathcal{M}^{-1}\psi)'=\psi-M\mathcal M^{-1}\psi$ and then, under the assumptions on $M$,
\begin{align*}
\|\mathcal M^{-1}\|_{Y\leftarrow X}&=\sup_{\substack{\psi\in X,\\ \|\psi\|_X =1}} \left(\|\mathcal M^{-1}\psi\|_X+\|(\mathcal M^{-1}\psi)'\|_X\right)\\
&\le \sup_{\substack{\psi\in X,\\ \|\psi\|_X=1}} \left(\|\mathcal M^{-1}\|_{X\leftarrow X}\|\psi\|_X+\|\psi-M\mathcal M^{-1}\psi\|_X\right)\\
&\le \sup_{\substack{\psi\in X,\\ \|\psi\|_X=1}} \left(1+\|\mathcal M^{-1}\|_{X\leftarrow X} +\|M\|_{L^\infty_\tau(\R, \R^{d\times d})} \|\mathcal M^{-1}\|_{X\leftarrow X}\right)\|\psi\|_X\\
&= 1+\left(1 +\|M\|_{L^\infty_\tau(\R, \R^{d\times d})} \right)\|\mathcal M^{-1}\|_{X\leftarrow X}.
\end{align*}
The claim follows from the compactness of the immersion of $Y$ into $X$ \cite[p.~285, Theorem 9.16]{Brezis2011}, from standard results on compact operators \cite[section 3.1 and 3.2, Theorem 3.11]{Kress1989} and from the Krein--Rutmann Theorem \cite{krein1948}, see also \cite[p.~315]{Schaefer}.
\end{proof}
Let us note that, in light of \eqref{ngoeig} and \eqref{KBM-1}, $R_0$ can be equivalently computed by solving either the eigenvalue problem
\begin{equation}\label{ngoeig1}
\mathcal B\mathcal M^{-1}\psi=\lambda\psi,\qquad (\lambda,\psi )\in \mathbb \R\times X,
\end{equation}
or by solving the generalized eigenvalue problem
\begin{equation}\label{gep}
\mathcal B\phi=\lambda\mathcal M\phi,\qquad (\lambda,\phi )\in \R\times Y,
\end{equation}
the equivalence relying on $\phi=\mathcal M^{-1}\psi$. For $\lambda=R_{0}$ it is not difficult to recover
\begin{equation*}
R_{0}=  \left(\displaystyle \sum_{i,j=1}^d \int_{0}^{\tau} B_{ij}(t) \phi_j(t)\dd t\right)\left(\displaystyle\sum_{i,j=1}^d \int_{0}^{\tau} M_{ij}(t) \phi_j(t)\dd t\right)^{-1},
\end{equation*}
for $\phi\in Y_+$ the nonnegative generalized eigenfunction.
The %above
characterization of $\mathcal K$ in \eqref{KBM-1} is particularly convenient for investigating methods to numerically approximate $R_0$. In fact, following the idea of \cite{BredaDeReggiScarabelVermiglioWu2022, BredaFlorianRipollVermiglio2021, BredaKuniyaRipollVermiglio2020, de2024approximating, de2024convergence, Theta2019, Kuniya2017} and as illustrated in the forthcoming section \ref{method}, we can directly discretize the operators $\mathcal B$ and $\mathcal M$ to derive a finite-dimensional version of \eqref{gep}. In this way, as anticipated at the end of section \ref{R0perdef}, we avoid to explicitly compute $\mathcal K$ through \eqref{KBC} or the monodromy operator $U^{(\lambda)}(\tau,0)$ in \eqref{nonlineareq}, thus without computing $\mathcal{M}^{-1}$, solving \eqref{ODEM} to get $V(t,s)$ or approximating integrals on the halfline. Finally, note that, in favor of numerical discretization, \eqref{gep} is posed on a space of functions $Y$ which is more regular than $X$ on which instead both \eqref{ngoeig} and \eqref{ngoeig1} are posed.

\section{The numerical approach}\label{method}
Let $N$ be a positive integer and consider a mesh
\begin{equation}\label{nodes}
\Theta_N\coloneqq \{0\le t_1<\dots <t_N\le \tau\}
\end{equation}
of $N$ distinct nodes over the period $[0,\tau]$. The underlying idea to discretize \eqref{gep} is that of approximating a function $\phi \in Y$ through a vector $\Phi \in Y_{N}\coloneqq\C^{dN}$ according to $\phi(t_i)\approx \Phi_i \in \C^d$, $i=1,\ldots,N$ (complexification is everywhere assumed as dealing with eigenvalues). This leads to the discrete (i.e., finite-dimensional) generalized eigenvalue problem
\begin{equation}\label{fingep}
\mathcal B_N \Phi=\lambda_N\mathcal M_N \Phi,\qquad (\lambda_N, \Phi) \in \C\times Y_{N}.
\end{equation}
Then $R_0$ is approximated through $$R_{0, N}\coloneqq \max\{|\lambda|\ :\  \lambda\in\C\text{ and } \mathcal 
 B_N\Phi= \lambda \mathcal  M_N \Phi\text{ for some } \Phi \in Y_{N}\}.$$
The structure of the resulting matrices $\mathcal B_N,\mathcal M_N\colon Y_{N}\to Y_{N}$ depends on the particular choice of the discretization technique.

In this paper we adopt collocation, and two alternate schemes are proposed, namely Fourier and Chebyshev collocation. On the one hand, Fourier collocation relies on using trigonometric polynomials as approximating functions \cite{Boyd2001, gottlieb2001, Trefethen2000}, so that periodicity is naturally ensured. Yet, in the presence of discontinuities in the model coefficients or in their derivatives, a piecewise extension is not immediate. On the other hand, using piecewise algebraic polynomials solves the issue, but periodicity needs to be enforced. As in this case we use a mesh of Chebyshev points, we refer to this approach as to Chebyshev collocation \cite{Boyd2001, gottlieb2001, Trefethen2000}.
To avoid an unnecessary heavy notation, we use the same symbols for both collocation schemes, which of the two we are referring to will be clear from the context. Moreover, as the piecewise extension of algebraic polynomials is a standard argument, we limit ourselves to describe the use of a single polynomial over the period.
To rigorously introduce both techniques, we assume the following.
\begin{assumption}\label{Assumption CBM}
$B, M\in C_\tau(\R,  \R^{d\times d})$. 
\end{assumption}

\subsection{Discretization via Fourier collocation}\label{sec:fourier}
We assume the following.
\begin{assumption}\label{AssFourier}
$\Theta_N$ in \eqref{nodes} is the mesh of equidistant nodes $$t_i\coloneqq \frac{\tau(i-1)}{N}, \qquad i=1,\ldots,N.$$
\end{assumption}
Let then $\mathbb F_{N}$ denote the related space of $\C^d$-valued trigonometric polynomials on $\R$ of degree at most $\lfloor N/2\rfloor$.
Let $\phi_{N}\in \mathbb F_{N}$ collocate \eqref{gep} as
\begin{equation}\label{gepFourier}
(\mathcal B\phi_{N})(t_i)=\lambda_{N}(\mathcal M\phi_{N})(t_i),\quad i=1,\ldots,N.
\end{equation}
By using the Lagrange representation $\phi_{N}(t)\coloneqq \sum_{j=1}^{N} \ell_j(t)\Phi_j$ for $\Phi_j\coloneqq\phi_{N}(t_{j})$, the cardinal properties of the Lagrange trigonometric basis $\{\ell_j\}_{j=1}^{N}$ relevant to $\Theta_N$ \cite{baltensperger2002, berrut2007} and the linearity of $\mathcal B$ and $\mathcal M$,  \eqref{gepFourier} can be rewritten as
\begin{equation}\label{gepFourier3}
B(t_i)\Phi_i =\lambda_{N}\left (\sum_{j=1}^{N} \ell_j'(t_i) \Phi_j+ M(t_i)\Phi_i\right), \quad i=1,\dots, N.
\end{equation}
By introducing the Fourier differentiation matrix $D_{N}\coloneqq [\ell_j'(t_i)]_{i,j=1,\dots, N}$ \cite{baltensperger2002, gottlieb2001}, \eqref{gepFourier3} reduces to \eqref{fingep} for
\begin{equation}\label{BN}
\mathcal B_{N}\coloneqq\blkdiag(B(t_1),\dots, B(t_{N}))
\end{equation}
and $\mathcal M_{N}\coloneqq(D_{N}\otimes I_{\C^d})+\blkdiag(M(t_1),\dots, M(t_N))$.
Above we use the MATLAB-like notation $\blkdiag$ to indicate that a matrix is block diagonal of the relevant entries. Finally, explicit entries of $D_{N}$ can be found in \cite{baltensperger2002, gottlieb2001, weideman2000}.

\subsection{Discretization via Chebyshev collocation}\label{sec:chebyshev}
We assume the following.
\begin{assumption}\label{AssCheb}
$\Theta_{N}$ in \eqref{nodes} is the mesh of Chebyshev extremal nodes $$t_i\coloneqq \frac{\tau}{2}\left[1-\cos\left(\frac{i\pi}{N}\right)\right],\qquad i=1,\ldots, N.$$ Let moreover $\Theta_{N,0}\coloneqq \{t_{0}\coloneqq0\}\cup\Theta_{N}$.
\end{assumption}
Let then $\mathbb P_{N}$ denote the related space of $\C^d$-valued algebraic polynomials on $\R$ of degree at most $N$.
Let $\phi_{N}\in \mathbb P_{N}$ collocate \eqref{gep} as
\begin{align}
 \label{gepChebBC}\phi_N(0)&=\phi_N(\tau),\\[0mm]
 \label{gepCheb}(\mathcal B\phi_N)(t_i)&=\lambda_{N}(\mathcal M\phi_N)(t_i),\quad i=1,\dots,N.
\end{align}
By using the Lagrange representation $\phi_{N}(t)\coloneqq \sum_{j=0}^{N} \ell_{j}(t)\Phi_{j}$ for $\Phi_j\coloneqq\phi_{N}(t_{j})$, the cardinal properties of the Lagrange algebraic polynomial basis $\{\ell_j\}_{j=0}^{N}$ relevant to $\Theta_{N,0}$ and the linearity of $\mathcal B$ and $\mathcal M$, \eqref{gepCheb} can be rewritten as
\begin{align}\label{gepCheb2}
B(t_i)\Phi_i &=\lambda_{N}\left (\sum_{j=1}^{N-1} \ell_j'(t_i) \Phi_j +(\ell_0'(t_i)+\ell_{N}'(t_i))\Phi_N+ M(t_i)\Phi_i\right),\quad i=1,\dots, N,
\end{align}
where \eqref{gepChebBC} has been explicitly incorporated by using $\Phi_{0}=\Phi_{N}$.
By introducing the Chebyshev differentiation matrix $D_N\coloneqq [\ell_{j}'(t_{i})]_{i,j=0,\dots, N}$ \cite{gottlieb2001, Trefethen2000}, \eqref{gepCheb2} reduces to \eqref{fingep} for $\mathcal B_N$ as in \eqref{BN} and $\mathcal M_N\coloneqq\left(\widehat {D_N} \otimes I_{\C^d}\right)+\blkdiag(M(t_1),\dots, M(t_{N}))$
with $\widehat {D_N}$ defined as
\begin{equation*}
\left[\widehat {D_N}\right]_{i,j}\coloneqq \begin{cases}\left[D_N\right]_{i,j}&\text{if }j\ne N,\\[0mm]
\left[D_N\right]_{i, 0}+\left[D_N\right]_{i,N} &\text{if }j=N,
\end{cases}
\end{equation*}
for $i,j=1,\dots, N$. Finally, explicit entries of $D_{N}$ can be found in \cite{gottlieb2001, Trefethen2000, weideman2000}.\footnote{In the MATLAB codes available at \url{http://cdlab.uniud.it/software}, the Fourier nodes and differentiation matrices are computed with the publicly available code \textsf{fourdif.m} from \cite{weideman2000}, while the Chebyshev ones are computed with \textsf{cheb.m} from  \cite{Trefethen2000}.}

\section{Convergence analysis}\label{convanal}
We now investigate the convergence of the methods presented in \Cref{method} giving error bounds under each of the following requirements.
\begin{assumption}\label{ass3}
\hspace{1mm}
\begin{enumerate}
\item \label{ass31} $B, M \in \Lip^s_\tau(\R,\R^{d\times d})$  for some integer $s\ge 0$,
\item \label{ass32} $B, M \in C_\tau^\infty(\R,\R^{d\times d})$,
\item \label{ass33} $B, M:\R\to\R^{d\times d}$ are $\tau$-periodic and  real analytic.
\end{enumerate} 
\end{assumption}
We follow two different strategies depending on whether the finite-dimensional approximation space is included or not in the original space $Y$. In the affirmative case, in fact, it is possible to invert $\mathcal M_N$ for $N$ large enough \cite{de2024convergence}. In particular, for the Fourier method $\mathbb F_N\subset Y$ holds and hence we can define an operator $\mathcal K_{N}\coloneqq \mathcal B_N\mathcal M_N^{-1}$ and, following \cite{de2024convergence}, investigate the convergence of its eigenvalues to those of $\mathcal K$ via the spectral approximation theory of \cite{Chatelin1981}. As far as the Chebyshev method is concerned, instead, $\mathbb P_N\not\subset Y$. Thus we alternatively follow the arguments of \cite[section 4]{BredaKuniyaRipollVermiglio2020}, deriving characteristic equations for both \eqref{gep} and \eqref{fingep}. We then prove the well-posedness of the collocation problem underlying \eqref{fingep} and investigate the convergence of the eigenvalues by comparing the characteristic equations via Rouch\'e's Theorem.

\subsection{Convergence of Fourier collocation}\label{subconvFourier}
Let us first prove that $\mathcal M_N$ is invertible for $N$ large enough. To this aim we introduce the restriction operator $R_{N}\colon Y\to Y_{N}$ given by $R_N\phi\coloneqq \left(\phi(t_1),\dots,\phi(t_N)\right)^{T}$ and the prolongation operator $P_N\colon Y_{N}\to \mathbb{F}_N\subset Y$ given by $P_N\Phi\coloneqq \sum_{i=1}^N\ell_i\Phi_i$, where $\Phi\coloneqq (\Phi_1,\dots,\Phi_N)^{T}$ with $\Phi_i\in \C^d$.
Note that $R_N P_N= I_{Y_{N}}$ and $P_N R_N =  L_N$, where $L_{N}\colon Y\to \mathbb{F}_N$ is the Lagrange interpolation operator relevant to $\Theta_{N}$.
Observe also that $\mathcal M_N$ is invertible iff for every $\Psi \in Y_{N}$ there exists a unique $\Phi\in Y_{N}$ s.t. $
\mathcal M_N\Phi=\Psi$.
Since $\mathcal M P_N\Phi \in C_\tau(\R, \C^d)$ (see \Cref{Assumption CBM}), by letting $\phi_N\coloneqq P_N\Phi$ it is not difficult to see that $R_N \mathcal M P_N\Phi$ is well-defined and $\Psi\coloneqq \mathcal M_N\Phi=R_N \mathcal M P_N\Phi=R_N \mathcal M \phi_N$. 
Applying $P_N$ to both sides gives $P_N\Psi=L_N\mathcal M\phi_N$.
Since $L_N \phi_N'=\phi_N'$, we get $L_N\mathcal M\phi_N=\phi_N'+L_{N}M\phi_N$ and therefore $\phi_N'=P_N\Psi- L_NM\phi_N$.
Adding $M\phi_N$ to both sides finally leads to\footnote{Note that here we see $L_N$ as an operator from $Y$ to $X$.}
\begin{equation}\label{eqpol}
\mathcal M\phi_N=P_N\Psi+(I_X-L_N)M\phi_N.
\end{equation}
Now, since $\phi_N\in Y$ and $\mathcal M$ is invertible,\footnote{Note that this is true only for the case of a trigonometric polynomial. In fact, in general, an algebraic polynomial is not a function of $X$.} there exists a unique $\psi\in X$ s.t. $\phi_N =\mathcal M^{-1}\psi$.\footnote{Note that $\psi$ may not be a trigonometric polynomial in general.}
Then it is not difficult to see that solving \eqref{eqpol} for $\phi_{N}$ is equivalent to solve $\psi=P_N \Psi+(I_X-L_N)M\mathcal M^{-1}\psi$ for $\psi$, which can be equivalently rewritten as $\left(I_X+\left(L_N-I_X\right)M\mathcal M^{-1}\right) \psi=P_N\Psi$.
We now show that $\mathcal M_N$ is invertible by proving that the latter admits a unique solution.
\begin{lemma}\label{lemmainv}
Under Assumptions \ref{AssFourier} and \ref{ass3} there exists a positive integer $\bar N$ s.t. for every $N\ge \bar N$, $\left(I_X+(L_N-I_X)M\mathcal M^{-1}\right)^{-1}$ exists bounded and
\begin{equation}\label{boundinvM}
\left\|\left(I_X+(L_N-I_X)M\mathcal M^{-1}\right)^{-1}\right\|_{X\leftarrow X}\le 2.
\end{equation}
Moreover, $\mathcal M_N$ is invertible and
%\begin{equation*}
$\mathcal M_N^{-1}=R_N\mathcal M^{-1}\left(I_X+(L_N-I_X)M\mathcal M^{-1}\right)^{-1}P_N.$
%\end{equation*}
\end{lemma}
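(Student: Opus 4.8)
The plan is to exploit the reformulation already set up in the text: $\mathcal M_N$ is invertible if and only if the operator $I_X + (L_N - I_X) M \mathcal M^{-1}$ is boundedly invertible on $X = L^1_\tau(\R,\R^d)$. So the crux is a Neumann series / small-perturbation argument showing that $(L_N - I_X) M \mathcal M^{-1}$ has operator norm less than $\tfrac12$ (say) for $N$ large enough. The key observation is that $\mathcal M^{-1}$ maps $X$ boundedly into $Y = AC_\tau(\R,\R^d)$ (this is exactly the computation in the proof of \Cref{HZcompact}), so $M \mathcal M^{-1}$ maps $X$ boundedly into a space of absolutely continuous — in particular $W^{1,1}_\tau$ — functions, which under \Cref{ass3} is even better: since $M \in \Lip^s_\tau$ and $\mathcal M^{-1}\psi \in Y$, the product $M\mathcal M^{-1}\psi$ has enough regularity that the trigonometric interpolation error $\|(L_N - I_X)M\mathcal M^{-1}\psi\|_X$ tends to $0$ uniformly over $\|\psi\|_X = 1$.

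First I would record that $\|\mathcal M^{-1}\|_{Y \leftarrow X} =: c_0 < \infty$ by the bound established in the proof of \Cref{HZcompact}, and that pointwise multiplication by $M$ is bounded from $Y$ into $W^{1,1}_\tau(\R,\R^d)$ (with norm controlled by $\|M\|_{\Lip^0_\tau}$ and $\|M'\|_{L^\infty_\tau}$ when $s \ge 1$, or just continuity of $M$ when we only want $C_\tau$-valued functions with a modulus of continuity). Then the central estimate is the $L^1$-interpolation error bound for trigonometric interpolation at equispaced nodes, e.g.\ \cite[p.~221, Theorem 3.2.9]{Mastroianni}: for $f$ with one (weak) derivative,
\begin{equation*}
\|(L_N - I_X) f\|_{L^1_\tau} \le C\, N^{-1}\, \|f'\|_{L^1_\tau},
\end{equation*}
or, at the bare minimum, $\|(L_N - I_X)f\|_{L^1_\tau} \le C\,\omega(f;1/N)$ in terms of the modulus of continuity, which suffices because $C_\tau$ suffices under \Cref{Assumption CBM}. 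Composing: $\|(L_N - I_X) M \mathcal M^{-1}\psi\|_X \le C N^{-1}\|(M\mathcal M^{-1}\psi)'\|_X \le C N^{-1}(\|M'\|_{L^\infty_\tau} + \|M\|_{L^\infty_\tau})\|\mathcal M^{-1}\psi\|_Y \le C' N^{-1} c_0 \|\psi\|_X$. Hence $\|(L_N - I_X) M \mathcal M^{-1}\|_{X \leftarrow X} \to 0$, so there is $\bar N$ with this norm $\le \tfrac12$ for $N \ge \bar N$; the Neumann series then gives invertibility of $I_X + (L_N - I_X)M\mathcal M^{-1}$ together with the bound \eqref{boundinvM}, since $\|(I_X + A)^{-1}\| \le (1 - \|A\|)^{-1} \le 2$ when $\|A\| \le \tfrac12$.

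Finally I would assemble the formula for $\mathcal M_N^{-1}$. From the chain of equivalences preceding the lemma, given $\Psi \in Y_N$ the unique $\Phi$ with $\mathcal M_N\Phi = \Psi$ satisfies $\Phi = R_N \phi_N$ where $\phi_N = \mathcal M^{-1}\psi$ and $\psi = (I_X + (L_N - I_X)M\mathcal M^{-1})^{-1} P_N\Psi$; substituting backwards yields $\mathcal M_N^{-1} = R_N \mathcal M^{-1}(I_X + (L_N - I_X)M\mathcal M^{-1})^{-1} P_N$, and one checks $\mathcal M_N (R_N \mathcal M^{-1}(\cdots)P_N) = I_{Y_N}$ and conversely using $R_N P_N = I_{Y_N}$ and $P_N R_N = L_N$. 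The main obstacle is making the regularity bookkeeping airtight: one must be careful that $M\mathcal M^{-1}\psi$ genuinely lands in a space where the quoted $L^1$ interpolation estimate applies, and that the constant in that estimate is independent of $N$ (classical for equispaced trigonometric interpolation) — everything else is a routine Neumann-series manipulation. Note that case \eqref{ass31} of \Cref{ass3} with $s = 0$ already gives a Lipschitz (hence $W^{1,\infty}_\tau \subset W^{1,1}_\tau$) coefficient $M$, so the argument covers the whole range of \Cref{ass3}.
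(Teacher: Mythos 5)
Your proposal is correct and follows essentially the same route as the paper's proof: boundedness of $M\mathcal M^{-1}\colon X\to Y$ (inherited from the proof of Theorem \ref{HZcompact} plus the Lipschitz regularity of $M$), the $L^1$ trigonometric-interpolation error bound in terms of $\|\phi'\|_X$, and the Banach perturbation (Neumann series) lemma, followed by back-substitution to obtain the formula for $\mathcal M_N^{-1}$. The only cosmetic discrepancy is that the quoted interpolation estimate actually carries an extra logarithmic factor, being $O(N^{-1}\log N)$ rather than $O(N^{-1})$ (and your fallback modulus-of-continuity bound would likewise pick up a $\log N$ from the Lebesgue constant), which does not affect the conclusion that the perturbation norm vanishes.
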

\begin{proof}
We use a proof technique similar to \cite[Proposition 3]{BredaKuniyaRipollVermiglio2020}. Observe that $M\mathcal M^{-1}\colon X\to Y$ is bounded (see the proof of \Cref{HZcompact}). Moreover, \cite[p.~221, Theorem 3.2.9]{Mastroianni} ensures that for $\phi\in Y$ and every $N\ge 3$ there exists a constant $K>0$  independent of $N$ s.t.
\begin{equation}\label{mastroiannibound}
\|(L_N-I_X)\phi\|_X \le K\frac{\log \lfloor{N/2}\rfloor}{\lfloor{N/2}\rfloor}\|\phi'\|_X\le K\frac{\log \lfloor{N/2}\rfloor}{\lfloor{N/2}\rfloor}\|\phi\|_{Y}.
\end{equation}
Thus
\begin{equation}\label{boundMM}
\left\| (L_N-I_X) M\mathcal M^{-1} \right\|_{X\leftarrow X}\to 0,\qquad N\to \infty,
\end{equation}
and the claim follows by the Banach perturbation Lemma \cite[p.~142, Theorem 10.1]{Kress1989}. 
\end{proof}
Let us observe that if $\Psi\coloneqq R_N \psi$ for $\psi \in C_\tau(\R, \C^d)$ then the unique solution $\phi_N\in \mathbb F_N$ of \eqref{eqpol} %is a (trigonometric) polynomial, and 
is given by $\phi_N=\mathcal M^{-1}\left(I_X+(L_N-I_X)M\mathcal M^{-1}\right)^{-1}L_N \Psi$.
Hence, by introducing the operator
\begin{equation}\label{invcontM}
\widehat{\mathcal M}_N^{-1}\colon C_\tau(\R, \C^d)\to Y,\qquad \widehat{\mathcal M}^{-1}_N\coloneqq \mathcal M^{-1}\left(I_X+(L_N-I_X)M\mathcal M^{-1}\right)^{-1}L_N,
\end{equation}
we have $\phi_N\coloneqq \widehat{\mathcal M}^{-1}_N\Psi$.
\begin{remark}
In the proof of \Cref{lemmainv}, the regularizing effect of the operator $\mathcal M^{-1}$ allows us to evaluate the interpolation error of a function of $Y$ with the $X$-norm, and then map the problem back to $Y$ via $\mathcal M^{-1}$. 
\end{remark}

Let us now observe that $\mathcal B$ is a multiplicative operator, thus with no regularizing effect on $X$ or $Y$. This brings in some difficulties in directly comparing $\mathcal K$ and $\mathcal K_N\coloneqq\mathcal{B}_{N}\mathcal{M}_{N}^{-1}$. Nevertheless, the relations $\sigma(\mathcal H)\setminus\{0\}=\sigma(\mathcal K)\setminus\{0\}$ and $\sigma(\mathcal H_N)=\sigma(\mathcal K_N)$ hold for the compact operator $\mathcal H\coloneqq \mathcal M^{-1}\mathcal B$ and the matrix $\mathcal H_N\coloneqq \mathcal M_N^{-1}\mathcal B_N$.\footnote{We remark that the operator $\mathcal H$ is actually the ``Next-Infection Operator'' considered in \cite{Wang2008} for the computation of $R_0$. See also \cite{Thieme2009}.}
Hence, to investigate the convergence of the eigenvalues of $\mathcal K_N$ to those of $\mathcal K$ we can investigate the convergence of the eigenvalues of $\mathcal H_N$ to those of $\mathcal H$.
Furthermore, as $\Range(\mathcal H)=Y$, we can restrict to $Y$. 

The operator ${P_N}\mathcal H_NR_N=\widehat {\mathcal M}_N^{-1}\mathcal BL_N\colon Y\to Y$ has the same nonzero eigenvalues with the same geometric and partial multiplicities of $\mathcal H_N$ \cite[Proposition 4.1]{BredaMasetVermiglio2012}. Since  $\Range(P_N)\subset \mathbb F_N$, ${P_N}\mathcal H_NR_N$ has the same nonzero eigenvalues with the same geometric and partial multiplicities and the same eigenfunctions of the operator $\widehat{\mathcal H}_N\coloneqq \widehat {\mathcal M}_N^{-1}\mathcal B\colon Y\to \mathbb F_N$ \cite[Proposition 4.3]{BredaMasetVermiglio2012}.
Since $\widehat{\mathcal H}_N-\mathcal H\colon Y\to Y$ is compact\footnote{The compactness of the natural immersion $Y\hookrightarrow  X$ ensures that the restriction to $Y$ of a bounded operator from $X$ to $Y$ is compact.} for every $N\ge0$, to apply the spectral approximation theory of \cite{Chatelin1981} it is enough to show that $\|\widehat{\mathcal H}_N-\mathcal H\|_{Y\leftarrow Y}\to 0$ as $N\to \infty$.
\begin{theorem}
Under Assumptions \ref{AssFourier} and \ref{ass3}, % \ref{ass31}
$\|\widehat{\mathcal H}_N-\mathcal H\|_{Y\leftarrow Y}\to 0$ as $N\to \infty$.
\end{theorem}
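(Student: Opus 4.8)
The strategy is to write $\widehat{\mathcal H}_N - \mathcal H$ as a composition of operators and bound each factor, using the key fact that $\widehat{\mathcal M}_N^{-1}$ from \eqref{invcontM} converges to $\mathcal M^{-1}$ in an appropriate operator norm. First I would record the identity
\begin{equation*}
\widehat{\mathcal H}_N - \mathcal H = \left(\widehat{\mathcal M}_N^{-1} - \mathcal M^{-1}\right)\mathcal B + \mathcal M^{-1}\mathcal B\left(L_N - I_X\right),
\end{equation*}
wait — more carefully, since $\widehat{\mathcal H}_N = \widehat{\mathcal M}_N^{-1}\mathcal B$ acts on $Y$ with $\mathcal B\colon Y\to X$ (or $Y\to C_\tau$) and $\mathcal H = \mathcal M^{-1}\mathcal B$, the cleanest decomposition writes
\begin{equation*}
\widehat{\mathcal H}_N - \mathcal H = \left(\widehat{\mathcal M}_N^{-1}L_N^{-1}\text{-type correction}\right);
\end{equation*}
concretely, using $\widehat{\mathcal M}_N^{-1} = \mathcal M^{-1}\left(I_X+(L_N-I_X)M\mathcal M^{-1}\right)^{-1}L_N$, I would factor
\begin{equation*}
\widehat{\mathcal H}_N - \mathcal H = \mathcal M^{-1}\left[\left(I_X+(L_N-I_X)M\mathcal M^{-1}\right)^{-1}L_N - I_X\right]\mathcal B.
\end{equation*}
Setting $E_N \coloneqq (L_N - I_X)M\mathcal M^{-1}$, which satisfies $\|E_N\|_{X\leftarrow X}\to 0$ by \eqref{boundMM}, the bracket equals $(I_X+E_N)^{-1}L_N - I_X = (I_X+E_N)^{-1}\bigl(L_N - I_X - E_N\bigr)$. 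Applying this to $\mathcal B\phi$ for $\phi\in Y$, and noting $\mathcal B\phi = B\phi\in C_\tau$, we get a product of: $\mathcal M^{-1}\colon X\to Y$ bounded; $(I_X+E_N)^{-1}$ bounded uniformly in $N$ by \Cref{lemmainv} (bound $\le 2$); and the remaining term $(L_N-I_X)\mathcal B\phi - E_N\mathcal B\phi$, whose $X$-norm I must show tends to zero uniformly over the unit ball of $Y$.

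For the final term, $\|E_N \mathcal B\phi\|_X \le \|E_N\|_{X\leftarrow X}\|\mathcal B\phi\|_X \le \|E_N\|_{X\leftarrow X}\|B\|_{L^\infty_\tau}\|\phi\|_Y \to 0$ uniformly, so this contributes nothing problematic. The genuine difficulty is the term $(L_N - I_X)\mathcal B\phi = (L_N - I_X)(B\phi)$: here $\mathcal B$ has no smoothing effect, so $B\phi$ need only be as regular as $\phi\in Y = AC_\tau$, i.e.\ merely absolutely continuous, and the interpolation bound \eqref{mastroiannibound} requires control of $(B\phi)'$. This is where \Cref{ass3}\ref{ass31} on the regularity of $B$ enters: since $B\in\Lip^s_\tau$ with $s\ge 0$ (in particular Lipschitz, hence in $W^{1,\infty}_\tau$), the product rule gives $(B\phi)' = B'\phi + B\phi'$ a.e., so $(B\phi)'\in X$ with $\|(B\phi)'\|_X \le \|B'\|_{L^\infty_\tau}\|\phi\|_X + \|B\|_{L^\infty_\tau}\|\phi'\|_X \le C_B\|\phi\|_Y$. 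Therefore $B\phi\in Y$ with $\|B\phi\|_Y \le C_B'\|\phi\|_Y$, and applying \eqref{mastroiannibound} to $B\phi$ yields $\|(L_N-I_X)(B\phi)\|_X \le K\frac{\log\lfloor N/2\rfloor}{\lfloor N/2\rfloor}\|B\phi\|_Y \le K C_B'\frac{\log\lfloor N/2\rfloor}{\lfloor N/2\rfloor}\|\phi\|_Y$, which tends to zero uniformly over $\|\phi\|_Y = 1$.

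Assembling the pieces: for $\|\phi\|_Y = 1$,
\begin{equation*}
\|(\widehat{\mathcal H}_N - \mathcal H)\phi\|_Y \le \|\mathcal M^{-1}\|_{Y\leftarrow X}\cdot 2\cdot\left(\|(L_N-I_X)(B\phi)\|_X + \|E_N\|_{X\leftarrow X}\|B\|_{L^\infty_\tau}\right),
\end{equation*}
and both terms in the parenthesis go to zero uniformly in $\phi$, giving $\|\widehat{\mathcal H}_N - \mathcal H\|_{Y\leftarrow Y}\to 0$. The main obstacle, as flagged, is precisely handling the non-smoothing multiplicative operator $\mathcal B$: one cannot simply pull out an interpolation error of a $Y$-function the way $\mathcal M^{-1}$ permitted in \Cref{lemmainv}, and it is essential to exploit that $B$ itself is at least Lipschitz so that $B\phi$ inherits enough regularity ($\phi\mapsto B\phi$ maps $Y$ boundedly into $Y$). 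I would also remark that the same argument simultaneously yields the convergence \emph{rate}: combining the $\Lip^s_\tau$ regularity of both $B$ and the hidden smoothness transferred through $\mathcal M^{-1}$, together with the sharper Jackson-type estimates in \cite{Mastroianni}, one obtains an algebraic rate $O(N^{-s}\log N)$ under \Cref{ass3}\ref{ass31} and spectral (faster than any polynomial) convergence under \Cref{ass3}\ref{ass32}–\ref{ass33}; but for the present statement only the qualitative limit is needed.
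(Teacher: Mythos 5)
Your argument is correct and follows essentially the same route as the paper: the paper writes $\widehat{\mathcal H}_N-\mathcal H=(\widehat{\mathcal M}_N^{-1}-\mathcal M^{-1})\mathcal B$ and splits $(I_X+E_N)^{-1}L_N-I_X$ into $(I_X+E_N)^{-1}(L_N-I_X)+\bigl((I_X+E_N)^{-1}-I_X\bigr)$, which is algebraically identical to your factorization $(I_X+E_N)^{-1}(L_N-I_X-E_N)$, and both proofs then invoke \eqref{boundinvM}, \eqref{mastroiannibound}, \eqref{boundMM} and the boundedness of $\mathcal B\colon Y\to Y$ (your explicit check that $B\phi\in Y$ via the product rule is exactly what the paper's closing remark ``under \Cref{ass3}, $\mathcal B$ is bounded'' relies on). The only caveat is your closing aside on rates: the $Y\leftarrow Y$ operator norm cannot decay faster than $O(N^{-1}\log N)$ since $\phi\in Y$ is merely absolutely continuous; the higher rates in \Cref{teospecper} are obtained only for the restriction to the (smoother) generalized eigenspace $E_\lambda$.
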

\begin{proof}
As $\widehat{\mathcal H}_N-\mathcal H=(\widehat{\mathcal M}^{-1}_N - \mathcal M^{-1})\mathcal B$, we investigate the behavior of $\widehat{\mathcal M}^{-1}_N-\mathcal M^{-1}$.
From \eqref{invcontM}, we have
\begin{align}\label{term1}
\widehat {\mathcal M}_N^{-1}-\mathcal M^{-1}= &\  
\mathcal M^{-1}\left(I_X+(L_N-I_X)M\mathcal M^{-1}\right)^{-1}(L_N-I_X)\\ \label{term2}
&+\mathcal M^{-1}\left((I_X+(L_N-I_X)M\mathcal M^{-1})^{-1}- I_X\right).
\end{align}
Direct computations show that
\begin{gather*}
(I_X+(L_N-I_X)M\mathcal M^{-1})^{-1}-I_X=
(I_X+(L_N-I_X)M\mathcal M^{-1})^{-1}(I_X-L_N)M\mathcal M^{-1}.
\end{gather*}
Hence,  \eqref{boundinvM} together with \eqref{term1}-\eqref{term2} gives
\begin{align}\label{boundnormaMper}
\|\widehat{\mathcal M}_N^{-1}-\mathcal M^{-1}\|_{Y\leftarrow Y}
\le K\bigg(\left\|
(L_N-I_{X})\right\|_{X\leftarrow Y}+\|(L_N- I_X)\mathcal M\mathcal M^{-1}\|_{X\leftarrow Y}\bigg)
\end{align}
for $N$ sufficiently large, where $K\coloneqq 2\|\mathcal M^{-1}\|_{Y\leftarrow X}$.
Then, the first term on the right-hand side of \eqref{boundnormaMper} vanishes thanks to \eqref{mastroiannibound}, while the second term vanishes thanks to \eqref{boundMM}.
Finally, under \Cref{ass3}, $\mathcal B$ is bounded and the claim follows by observing that $\|\widehat{\mathcal H}_N-\mathcal H\|_{Y\leftarrow Y}\le \left\|\widehat {\mathcal M}_N^{-1}-\mathcal M^{-1}\right\|_{Y\leftarrow Y}\mathcal \|\mathcal B\|_{Y\leftarrow Y}$.
\end{proof}

We conclude with the main convergence result.
\begin{theorem}\label{teospecper}
Under Assumptions \ref{AssFourier} and \ref{ass3}, let $\lambda\in\C$ be an isolated nonzero eigenvalue of $\mathcal H$ with finite algebraic multiplicity $m$ and ascent $l$, and let $\Delta$ be a neighborhood of $\lambda$ s.t. $\lambda$ is the sole eigenvalue of $\mathcal H$ in $\Delta$. Then there exists a positive integer $\bar N$ s.t. for every  $N\ge \bar N$ $\widehat{\mathcal H}_N$ has in $\Delta$ exactly $m$ eigenvalues $\lambda_{N, i}$, $i=1,\dots, m$ (counting multiplicities), and 
\begin{align*}
\max_{i=1, \dots, m}|\lambda_{N, i}-\lambda|=O\big(\varepsilon_N^{1/l}\big),
\end{align*}
where
\begin{equation}\label{epsilon}
\varepsilon_N\coloneqq \|\mathcal {\widehat H}_N-\mathcal H\|_{Y\leftarrow E_{\lambda}}
\end{equation}
and $E_{\lambda}$ is the generalized eigenspace of $\lambda$. 
Moreover, for any $i=1,\dots, m$ and for any eigenfunction $\psi_{N, i}$ of $\mathcal {\widehat H}_N$ relevant to $\lambda_{N,i}$ s.t. $\|\psi_{N,i}\|_Y=1$, we have
%\begin{equation*}
$\dist(\psi_{N,i},\  \ker(\lambda I_Y-\mathcal H))=O(\varepsilon_N^{1/l}),$
%\end{equation*}
with $\dist$ the distance in $Y$ between an element and a subspace. Finally, $\varepsilon_N=O(\rho_{N})$ for
\begin{equation}\label{fbound}
\rho_N\coloneqq \begin{cases}
N^{-(s+1)}\log N \quad&\text{under \Cref{ass3}\ref{ass31}},\\
N^{-r}\quad&\text{for every integer $r\ge 1$ under \Cref{ass3}\ref{ass32}},\\
p^{-N}\quad&\text{for some constant $p>1$ under \Cref{ass3}\ref{ass33}}.
\end{cases}
\end{equation}
\end{theorem}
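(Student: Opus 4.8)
The plan is to split the statement into two parts: a qualitative part delivered by the abstract spectral approximation theory of \cite{Chatelin1981}, and a quantitative part, $\varepsilon_N=O(\rho_N)$, in which the regularity of $B$ and $M$ enters through trigonometric interpolation error bounds. For the first part I would invoke, exactly as anticipated in the paragraph preceding the statement, that $\widehat{\mathcal H}_N-\mathcal H$ is compact on $Y$ for every $N$ and that $\|\widehat{\mathcal H}_N-\mathcal H\|_{Y\leftarrow Y}\to 0$ by the previous theorem, so that $\{\widehat{\mathcal H}_N\}$ is a norm-convergent approximation of the compact operator $\mathcal H$. The theory of \cite{Chatelin1981} then yields at once: for $N\ge\bar N$, $\widehat{\mathcal H}_N$ has exactly $m$ eigenvalues in $\Delta$ counting multiplicities; $\max_{i}|\lambda_{N,i}-\lambda|=O(\varepsilon_N^{1/l})$ with $\varepsilon_N$ as in \eqref{epsilon}; and, for a normalized eigenfunction $\psi_{N,i}$ relevant to $\lambda_{N,i}$, $\dist(\psi_{N,i},\ker(\lambda I_Y-\mathcal H))=O(\varepsilon_N^{1/l})$.

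It then remains to prove $\varepsilon_N=O(\rho_N)$, i.e.\ to bound $\|(\widehat{\mathcal H}_N-\mathcal H)\phi\|_Y$ uniformly for $\phi\in E_\lambda$ with $\|\phi\|_Y=1$; since $E_\lambda$ is finite-dimensional it suffices to do so on a fixed basis. The crucial ingredient is a bootstrapping regularity argument for $E_\lambda$: because $\lambda\neq0$, $\mathcal H|_{E_\lambda}$ is an isomorphism of $E_\lambda$, whence $E_\lambda=\mathcal H(E_\lambda)=\mathcal M^{-1}\mathcal B(E_\lambda)$, so every $\phi\in E_\lambda$ equals $\mathcal M^{-1}\mathcal B\tilde\phi$ for some $\tilde\phi\in E_\lambda$; iterating and using that $\mathcal M^{-1}\psi=\phi$ solves $\phi'=\psi-M\phi$ — hence raises the regularity of its argument by one order as long as $M$ is at least that smooth — one obtains $E_\lambda\subset\Lip^{s+1}_\tau$ under \Cref{ass3}\ref{ass31}, $E_\lambda\subset C^\infty_\tau$ under \Cref{ass3}\ref{ass32}, and that $E_\lambda$ consists of $\tau$-periodic real-analytic functions under \Cref{ass3}\ref{ass33} (analyticity of solutions of linear ODEs with analytic coefficients). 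Next, writing $\widehat{\mathcal H}_N-\mathcal H=(\widehat{\mathcal M}_N^{-1}-\mathcal M^{-1})\mathcal B$ and combining the decomposition \eqref{term1}--\eqref{term2} with the bound \eqref{boundinvM}, for $\phi\in E_\lambda$ one gets
\begin{equation*}
\|(\widehat{\mathcal H}_N-\mathcal H)\phi\|_Y\le 2\|\mathcal M^{-1}\|_{Y\leftarrow X}\big(\|(L_N-I_X)B\phi\|_X+\|(L_N-I_X)M\mathcal H\phi\|_X\big),
\end{equation*}
using $\mathcal M^{-1}\mathcal B\phi=\mathcal H\phi$. Both $B\phi$ and $M\mathcal H\phi$ are products of a coefficient with an element of $E_\lambda$, which is one order smoother, so they lie in the same regularity class as the coefficients; it then suffices to insert the $L^1$ trigonometric interpolation error estimates at equidistant nodes — $O(N^{-(s+1)}\log N)$ for a $\Lip^s_\tau$ argument (the higher-order form of \cite[p.~221, Theorem 3.2.9]{Mastroianni}, of which \eqref{mastroiannibound} is the $s=0$ instance), $O(N^{-r})$ for every integer $r\ge1$ for a $C^\infty_\tau$ argument, and $O(p^{-N})$ for some $p>1$ for a real-analytic argument (geometric decay of the Fourier coefficients) — and take the supremum over the unit ball of $E_\lambda$ to obtain $\varepsilon_N=O(\rho_N)$ with $\rho_N$ as in \eqref{fbound}.

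I expect the main obstacle to be the bootstrapping regularity step together with matching the resulting smoothness of $B\phi$ and $M\mathcal H\phi$ to the exponent $s+1$ in the interpolation bound: one has to track the products $B^{(j)}\phi^{(k-j)}$ carefully to see that $B\phi\in\Lip^s_\tau$ whenever $B\in\Lip^s_\tau$ and $\phi\in\Lip^{s+1}_\tau$, treat the real-analytic case with an analyticity-strip argument (yielding the geometric rate $p^{-N}$ rather than merely super-algebraic decay), and, on the interpolation side, upgrade the crude $\|\phi'\|_X$-type bound used in \Cref{lemmainv} to the full $\|\phi^{(s+1)}\|_X$-dependence needed here.
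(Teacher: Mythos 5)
Your proposal is correct and follows essentially the same route as the paper: the qualitative statements come from Chatelin's spectral approximation theory via the norm convergence established just before, and the bound $\varepsilon_N=O(\rho_N)$ is obtained by applying the decomposition \eqref{term1}--\eqref{term2} together with \eqref{boundinvM} to a basis of $E_\lambda$ and estimating $\|(L_N-I_X)B\psi_{\lambda,i}\|_X$ and $\|(L_N-I_X)M\mathcal M^{-1}B\psi_{\lambda,i}\|_X$. The only cosmetic differences are that you make the bootstrapping regularity of $E_\lambda$ explicit (the paper simply asserts the needed smoothness of $B\psi_{\lambda,i}$ and $M\mathcal M^{-1}B\psi_{\lambda,i}$), and that you invoke higher-order $L^1$ interpolation bounds directly, whereas the paper passes to the uniform norm and combines the Lebesgue constant $\Lambda_N=O(\log N)$ with Jackson-type estimates of the best approximation error.
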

\begin{proof}
Most of the proof follows from \cite[Proposition 2.3 and Proposition 4.1]{Chatelin1981}. It is left to prove \eqref{fbound}, for which it is enough to show that 
\begin{equation}\label{maxbound}
\max_{i=1,\dots, m}\|(\widehat{\mathcal H}_N-\mathcal H)\psi_{\lambda,i}\|_{Y}=O(\rho_N)
\end{equation}
for $\{\psi_{\lambda,i}\}_{i=1}^m$ a basis of $ E_\lambda$. Then the thesis will follow by proceeding as in \cite[Proposition 4.9]{liessi2018}.
From \eqref{boundnormaMper} we get
\begin{equation*}
\|(\widehat{\mathcal H}_N-\mathcal H)\psi_{\lambda, i}\|_{Y}
\le  K\left( \|(L_N - I_X)B\psi_{\lambda, i}\|_{X}+\|(L_N-I_X)M\mathcal M^{-1}B\psi_{\lambda, i}\|_{X}\right),
\end{equation*} 
for $K\coloneqq 2\|\mathcal M^{-1}\|_{Y\leftarrow X}$ and
\begin{enumerate}
\item $B\psi_{\lambda, i},\ M\mathcal M^{-1}B\psi_{\lambda, i} \in \Lip^{s}_\tau(\R, \C^{d})$ under \Cref{ass3}\ref{ass31}, 
\item $B\psi_{\lambda, i},\ M\mathcal M^{-1}B\psi_{\lambda, i} \in C_\tau^{\infty}(\R, \C^{d})$ under \Cref{ass3}\ref{ass32},
\item $B\psi_{\lambda, i},\ M\mathcal M^{-1}B\psi_{\lambda, i}$ are analytic under \Cref{ass3}\ref{ass33}.
\end{enumerate}
Then $\|(L_N-I_X)B\psi_{\lambda, i}\|_{X}\le \tau \|(L_N-I_Y)B\psi_{\lambda, i}\|_{\infty}$
and $\|(L_N-I_X)M\mathcal M^{-1}B\psi_{\lambda, i}\|_{X} $ $\le \tau\|(L_N-I_Y)M\mathcal M^{-1}B\psi_{\lambda, i}\|_{\infty}$ hold and,
under \Cref{ass3}\ref{ass31}, standard results in approximation theory and Jackson's type theorems \cite[section 1.1.2]{Rivlin1981} ensure that $$\|(L_N-I_Y)B\psi_{\lambda, i}\|_{\infty}\le (1+\Lambda_N)  E_N(B\psi_{\lambda, i})=O(\rho_{N})$$
and 
$$\|(L_N-I_Y)M\mathcal M^{-1}B\psi_{\lambda, i}\|_{\infty}\le (1+\Lambda_N)  E_N(M\mathcal M^{-1}B\psi_{\lambda, i})=O(\rho_{N}),$$
where $\Lambda_N$ is the Lebesgue constant relevant to $\Theta_N$, which equals $O(\log N)$ under \Cref{AssFourier} \cite[p.~211, eq.~3.2.32]{Mastroianni}, and $E_N(f)$ is the best uniform approximation error  on $\mathbb F_N$ for a continuous periodic function $f$.
The bounds under \Cref{ass3}\ref{ass32} and \Cref{ass3}\ref{ass33} follow similarly from \cite[Theorem 4.2]{wright2015}.
Therefore, \eqref{maxbound} holds and the proof is complete. 
\end{proof}

\subsection{Convergence of Chebyshev collocation}
We first derive a characteristic equation for the exact generalized eigenvalue problem \eqref{gep}. Since we are interested in the (positive) spectral radius of $\mathcal K$, we assume $\lambda\ne 0$, so that \eqref{gep} can be rewritten as
\begin{equation}\label{gep1}
\frac{1}{\lambda} \mathcal B\phi =\mathcal M\phi.
\end{equation}
By setting $A^{(\lambda)}\coloneqq B/\lambda-M$ for brevity, we observe that $(\lambda, \phi)$ solves \eqref{gep1} iff $\phi$ solves the periodic boundary value problem
\begin{equation}\label{pbvp}
\left\{\setlength\arraycolsep{0.1em}\begin{array}{rlll} 
\phi'(t)&=&A^{(\lambda)}(t)\phi(t),&\qquad t\in [0, \tau], \\[0mm]
\phi(0)&=&\phi(\tau). &
\end{array} 
\right.
\end{equation}
Now, the family of IVPs
\begin{equation}\label{ivp}
\left\{\setlength\arraycolsep{0.1em}\begin{array}{rlll} 
\phi'(t)&=&A^{(\lambda)}(t)\phi(t),&\qquad t\in [0, \tau], \\[0mm]
\phi(0)&=&\alpha, &
\end{array} 
\right.
\end{equation}
for $\alpha\in \C^d$ admits a unique solution $\phi(t; \lambda, \alpha)$ for every $\alpha$, which can be expressed as $$\phi(t; \lambda, \alpha) = U^{(\lambda)}(t, 0)\alpha,$$ for $U^{(\lambda)}$ the PMS of \eqref{ivp}. With abuse of notation we write $U^{(\lambda)}(t)=U^{(\lambda)}(t, 0)$. Since a solution of \eqref{ivp} solves \eqref{pbvp} iff $\alpha=\phi(0; \lambda, \alpha) = U^{(\lambda)}(\tau)\alpha$,
we arrive at the characteristic equation
\begin{equation}\label{chareq}
c(\lambda) = 0,
\end{equation}
for the characteristic function
$c(\lambda) \coloneqq \det \left(I_{\C^d} - U^{(\lambda)}(\tau)\right)$.

Similarly, we derive the discrete counterpart of \eqref{chareq} by observing that $(\lambda, \Phi)\in \C\times Y_{N}$ satisfies  $\mathcal B_{N}\Phi/\lambda =\mathcal M_{N}\Phi$
iff the polynomial $\phi_N$ interpolating $\Phi$ at the Chebyshev nodes satisfies
\begin{equation}\label{pbvpC}
\left\{\setlength\arraycolsep{0.1em}\begin{array}{rlll} 
\phi_N'(t_i)&=&A^{(\lambda)}(t_i)\phi_N(t_i),&\qquad i=1,\dots, N, \\[0mm]
\phi_N(0)&=&\phi_N(\tau). &
\end{array} 
\right.
\end{equation}
Below, we prove that the auxiliary collocation problem
\begin{equation}\label{ivpcoll}
\left\{\setlength\arraycolsep{0.1em}\begin{array}{rlll} 
p_N'(t_i)&=&A^{(\lambda)}(t_i)p_N(t_i),&\qquad i=1,\dots, N, \\[0mm]
p_N(0)&=&\alpha. &
\end{array} 
\right.
\end{equation}
for \eqref{ivp} has a unique solution for every $\alpha\in \C^d$. Hence, we can define $U_N^{(\lambda)}\colon [0, \tau]\to \C^{d\times d}$ s.t. 
$$p_{N}(t; \lambda, \alpha)=U_N^{(\lambda)}(t)\alpha.$$
Since a solution of \eqref{ivpcoll} solves \eqref{pbvpC} iff $\alpha=p_{N}(0; \lambda, \alpha) = U_N^{(\lambda)}(\tau)\alpha$,
we arrive at the discrete characteristic equation
\begin{equation}\label{chareqdisc}
c_N(\lambda) = 0,
\end{equation}
for the characteristic function
$c_N(\lambda) \coloneqq \det \left(I_{\C^d} - U_N^{(\lambda)}(\tau)\right)$.

The convergence of the eigenvalues is then investigated by studying the error $|c_N(\lambda)-c(\lambda)|$
in a neighborhood of a spectral value of $\mathcal K$.
To this aim let $B(\lambda, z)$ denote the closed ball in $\C$ of center $\lambda$ and radius $z>0$.
Then, for every $\lambda^*\in \C$ and $z>0$ there exists $C_1=C_1(\lambda^*, z) >0$ s.t.  
\begin{equation}\label{boundeqcarper}
|c_N(\lambda)-c(\lambda)|\le C_1\left\|U_N^{(\lambda)}(\tau) - U^{(\lambda)}(\tau)\right\|_{\C^d\leftarrow \C^d}=C_1\sup_{%\substack{\alpha \in \C^d \\ 
|\alpha|_{\C^d}=1}%}
|e_N(\tau; \lambda, \alpha)|_{\C^d}
\end{equation}
since $\det(A)$ is a locally Lipschitz function w.r.t. $A\in \C^{d\times d}$ and where $$e_{N}(\cdot; \lambda, \alpha)\coloneqq p_N(\cdot; \lambda, \alpha)-\phi(\cdot; \lambda, \alpha)$$ is the collocation error of \eqref{ivp} through \eqref{ivpcoll}. Thus we need a bound for $| e_N(\tau; \lambda, \alpha)|_{\C^d}$. First we prove that \eqref{ivpcoll} is well posed.
Under \Cref{ass3}, the solutions of \eqref{ivp}, and consequently $e_N$, are  continuously differentiable.
Hence we can restrict to $C([0, \tau], \C^d)$.
For the sake of readability, hereafter we avoid explicitly writing the dependence of $\phi, p_N$ and $e_N$ on $\lambda, \alpha$ when unnecessary.
Let $L_{N-1}\colon C([0, \tau], \C^d)\to C([0, \tau], \C^d)$ denote the Lagrange interpolation operator relevant to $\Theta_N$. 
Since $L_{N-1}p_N'=p_N'$, it follows from \eqref{ivp} and \eqref{ivpcoll} that $e_N'$ satisfies the functional equation
\begin{equation}\label{colleqextrema}
e_{N}'\coloneqq L_{N-1}A^{(\lambda)}Ve'_{N}+r_N,
\end{equation}
where $V\colon C([0, \tau], \C^d)\to C([0, \tau], \C^d)$ is the Volterra operator $(V\eta)(t)=\int_0^t \eta(\xi)\dd\xi$
and
$r_N\coloneqq (L_{N-1}-I_{C([0, \tau], \C^d)})A^{(\lambda)}\phi$.
Now we show that \eqref{colleqextrema} has a unique solution, and we derive a bound to $|e_N(\tau)|\coloneqq |(Ve_N')(\tau)|$ in terms of $r_N$. To this aim, we need $\tilde\Lambda_{N}/N$ to vanish as $N\to\infty$ for the Lebesgue constant $\tilde \Lambda_N\coloneqq \|L_{N-1}\|_{C([0, \tau], \C^d)\leftarrow C([0, \tau], \C^d)}$ relevant to $\Theta_{N}$.
Unfortunately, $\tilde\Lambda_{N}/N$ is only known to be bounded under \Cref{AssCheb} when $C([0, \tau], \C^d)$ is endowed with the uniform norm \cite[section 4.2]{Mastroianni}.
To overcome the issue we resort to the (weaker) weighted uniform norm $\|\eta\|_{w, \infty}\coloneqq \|\eta\cdot w\|_\infty$, where $w(t)\coloneqq (2/\tau)^2\sqrt{(\tau-t)t}$. In fact, by denoting with $C_w$ the space $C([0, \tau], \C^d)$ endowed with $\|\cdot \|_{w, \infty}$, one has $\Lambda_N\coloneqq \|L_{N-1}\|_{C_w\leftarrow C_w}=O(\log N)$ \cite[p.~274, Theorem 4.3.2]{Mastroianni}.
Additionally, we consider the space $\Lip_w([0, \tau], \C^d)$, which is the space of $\C^d$-valued Lipschitz continuous functions on $[0, \tau]$ %$\Lip^0([0, \tau], \C^d)$ 
endowed with the $w$-weighted norm $\|\eta\|_{\Lip_w^{p}}\coloneqq \| \eta w\|_{\infty}+L_w(\eta)$, where $L_w(\eta)$ denotes the Lipschitz constant of $\eta$ w.r.t. the norm $\|\cdot \|_w$. Then we have the following result.
\begin{proposition}\label{collerrper}
Under Assumptions \ref{AssCheb} and \ref{ass3} there exists a positive integer $\bar N=\bar N(\lambda)$ s.t., for every integer $N\ge \bar N$,  \eqref{colleqextrema} has a unique solution $e_N'$ in $C_w$ and
\begin{equation}\label{boundonen}
|e_N(\tau)| \le 2 \pi|\alpha| \varepsilon_N\left\| \left(I_{C_w}- A^{(\lambda)}V\right)^{-1}\right\|_{C_w\leftarrow C_w}, %\|r_N\|_{C_w}.
\end{equation}
where
\begin{equation}\label{fboundCheb}
\varepsilon_N\coloneqq \begin{cases}
O\left(N^{-(s+1)}\log N\right) \quad&\text{under \Cref{ass3}\ref{ass31}},\\
O\left(N^{-r}\right)\quad&\text{for every integer $r\ge 1$ under \Cref{ass3}\ref{ass32},}\\
O\left(p^{-N}\right)\quad&\text{for some constant $p>1$ under \Cref{ass3}\ref{ass33}}.
\end{cases}
\end{equation}
\end{proposition}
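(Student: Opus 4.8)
The plan is to adapt \cite[Proposition 3]{BredaKuniyaRipollVermiglio2020}, working throughout in the weighted space $C_w$. First I would rewrite the functional equation \eqref{colleqextrema} as $(I_{C_w}-L_{N-1}A^{(\lambda)}V)e_N'=r_N$ and observe that $A^{(\lambda)}V\colon C_w\to C_w$ is a bounded Volterra operator of the second kind, so that $I_{C_w}-A^{(\lambda)}V$ is boundedly invertible (the spectral radius of a Volterra operator being zero). Factoring
\begin{equation*}
I_{C_w}-L_{N-1}A^{(\lambda)}V=(I_{C_w}-A^{(\lambda)}V)\Bigl(I_{C_w}-(I_{C_w}-A^{(\lambda)}V)^{-1}(L_{N-1}-I_{C_w})A^{(\lambda)}V\Bigr),
\end{equation*}
the key point becomes showing $\|(L_{N-1}-I_{C_w})A^{(\lambda)}V\|_{C_w\leftarrow C_w}\to 0$ as $N\to\infty$. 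For this I would use that $V$ maps $C_w$ boundedly into $\Lip_w([0,\tau],\C^d)$ — hence so does $A^{(\lambda)}V$, $A^{(\lambda)}$ being continuous — together with the weighted interpolation error bound \cite[p.~271, eq.~4.33]{Mastroianni} and the estimate $\Lambda_N=O(\log N)$ for the $w$-weighted Lebesgue constant of $\Theta_N$ \cite[p.~274, Theorem 4.3.2]{Mastroianni}, which combined give $\|(L_{N-1}-I_{C_w})\eta\|_{w,\infty}=O(N^{-1}\log N)\|\eta\|_{\Lip_w}$ for $\eta\in\Lip_w$. The Banach perturbation lemma \cite[p.~142, Theorem 10.1]{Kress1989} then furnishes $\bar N=\bar N(\lambda)$ such that, for $N\ge\bar N$, the second factor above is invertible with norm $\le 2$; this yields the unique solvability of \eqref{colleqextrema} together with $\|e_N'\|_{w,\infty}\le 2\|(I_{C_w}-A^{(\lambda)}V)^{-1}\|_{C_w\leftarrow C_w}\|r_N\|_{w,\infty}$.

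Next I would pass from $e_N'$ to $e_N(\tau)$. Since $p_N(0)=\alpha=\phi(0)$ one has $e_N(0)=0$, hence $e_N=Ve_N'$ and $e_N(\tau)=\int_0^\tau e_N'(\xi)\dd\xi$; factoring out the weight and using the elementary identity $\int_0^\tau((\tau-\xi)\xi)^{-1/2}\dd\xi=\pi$ gives $|e_N(\tau)|\le\|e_N'\|_{w,\infty}\int_0^\tau w(\xi)^{-1}\dd\xi$, a fixed finite multiple of $\|e_N'\|_{w,\infty}$. Combining with the previous display and with the bound on $\|r_N\|_{w,\infty}$ derived below, and absorbing the resulting fixed constants into $\varepsilon_N$, yields \eqref{boundonen}.

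It then remains to prove \eqref{fboundCheb}, i.e., to bound $\|r_N\|_{w,\infty}=\|(L_{N-1}-I_{C_w})A^{(\lambda)}\phi\|_{w,\infty}$ for $\phi=\phi(\cdot;\lambda,\alpha)=U^{(\lambda)}(\cdot)\alpha$. The idea is that bootstrapping the ODE $\phi'=A^{(\lambda)}\phi$ transfers the regularity of the coefficients to $\phi$, hence to $A^{(\lambda)}\phi$: under \Cref{ass3}\ref{ass31} one gets $\phi\in\Lip^{s+1}([0,\tau],\C^d)$ and so $A^{(\lambda)}\phi\in\Lip^{s}([0,\tau],\C^d)$ with norm $\le C(\lambda)|\alpha|$; under \Cref{ass3}\ref{ass32}, $A^{(\lambda)}\phi\in C^\infty([0,\tau],\C^d)$; under \Cref{ass3}\ref{ass33}, $A^{(\lambda)}\phi$ extends holomorphically to a complex neighborhood of $[0,\tau]$, since linear ODEs with analytic coefficients have analytic solutions. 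Feeding these into the weighted interpolation error estimates of \cite{Mastroianni} — Jackson-type bounds on the $w$-weighted best approximation error together with $\Lambda_N=O(\log N)$, and, in the analytic case, the standard geometric decay over a Bernstein ellipse — yields $\|r_N\|_{w,\infty}=O(|\alpha|\,\varepsilon_N)$ with $\varepsilon_N$ as in \eqref{fboundCheb}, completing the argument.

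The hard part will be the norm convergence $\|(L_{N-1}-I_{C_w})A^{(\lambda)}V\|_{C_w\leftarrow C_w}\to 0$ in the first step: the uniform Lebesgue constant $\tilde\Lambda_N$ of $\Theta_N$ is only known to satisfy $\tilde\Lambda_N/N=O(1)$, which is too weak here, so one must work with the weighted norm $\|\cdot\|_{w,\infty}$, for which $\Lambda_N=O(\log N)$, and pair it with the matching weighted interpolation-error bounds and the smoothing property $V\colon C_w\to\Lip_w$ — precisely where the Chebyshev analysis diverges from the Fourier one.
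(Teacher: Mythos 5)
Your proposal is correct and follows essentially the same route as the paper's proof: the same rewriting of \eqref{colleqextrema} as a perturbed second-kind equation, the same use of the weighted Lebesgue constant $\Lambda_N=O(\log N)$ and the weighted interpolation bound on $\Lip_w$ together with the Banach perturbation lemma, the same passage from $\|e_N'\|_{w,\infty}$ to $|e_N(\tau)|$ via $w^{-1}\in L^1$, and the same Jackson-type estimates for $\|r_N\|_{w,\infty}$. Your multiplicative factorization of $I_{C_w}-L_{N-1}A^{(\lambda)}V$ is an equivalent variant of the paper's additive splitting, and your explicit quasi-nilpotency argument for the invertibility of $I_{C_w}-A^{(\lambda)}V$ merely fills in a step the paper leaves to the reader.
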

\begin{proof}
We adapt the arguments of \cite[Proposition 3]{BredaKuniyaRipollVermiglio2020} by proving that there exists a positive integer $\bar N:=\bar N(\lambda)$ s.t. \eqref{colleqextrema}
admits a unique solution for every $N\ge\bar N$.
Let us rewrite \eqref{colleqextrema} %\eqref{auxeq} 
as $(I_{C_w}-L_{N-1}A^{(\lambda)}V) e_{N}' =r_N$
and observe that
$(I_{C_w}-L_{N-1}A^{(\lambda)}V) = (I_{C_w}-A^{(\lambda)}V) + (I_{C_w}-L_{N-1})A^{(\lambda)}V$.
It is not difficult to see that $I_{C_w}-A^{(\lambda)}V$ is invertible with bounded inverse in $C_w$.
Furthermore, since $\Range(A^{(\lambda)}V)\subset \Lip_w$ and $A^{(\lambda)}V\colon C_w \to \Lip_w$ is bounded, \Cref{AssCheb}, $\Lambda_N=O(\log N)$ and \cite[p.~271, eq.~4.33]{Mastroianni}
guarantee that there exists a positive constant $K$ independent of $N$ s.t. \begin{align}\label{Mastroianniext}
\left\|(I_{C_w}-L_{N-1})A^{(\lambda)}V\right\|_{C_w\leftarrow C_w}
\le K \|A^{(\lambda)}V\|_{\Lip_w\leftarrow C_w} \frac{\log N}{N}\to 0,\quad N\to \infty.
\end{align}
Hence, the Banach perturbation Lemma \cite[p.~142, Theorem 10.1]{Kress1989} ensures that there exists a positive integer $\bar N=\bar N(\lambda)$ s.t. $I_{C_w}-{L}_{N-1}A^{(\lambda)}V$ is invertible in $C_w$ and $\|e_N'\|_{w, \infty}\ \le 2\| \left(I_{C_w}- A^{(\lambda)}V\right)^{-1}\|_{C_w\leftarrow C_w}\|r_N\|_{w, \infty}$
holds for every $N\ge \bar N$. From \cite[p.~271, eq. 4.33]{Mastroianni} and standard results in approximation theory, we get $\|r_N\|_{w, \infty}\le
\left(1+\Lambda_N\right) E_{N-1}(A^{(\lambda)}\phi)$,
where $E_{N-1}(f)$ is the best approximation error  on $\mathbb P_{N-1}$ for a continuous function $f$. Then
$\|r_N\|_{w,\infty}=|\alpha|\varepsilon_N$ follows from \Cref{ass3} and $\Lambda_N=O(\log N)$ since $r_N$ depends linearly on $\alpha$, and estimates of $E_{N-1}(A^{(\lambda)}\phi)$ can be obtained by applying Jackson's type theorems \cite[section 1.1.2]{Rivlin1981}, \cite[chapter 7 and 8]{Trefethen2013}. Finally, \eqref{boundonen} follows from $|e_N(\tau)|_{\C^d} \le \|e_N'\|_{L^1([0, \tau], \C^d)}\le \|w^{-1}\|_{L^1([0, \tau], \C)}\| e_N' \|_{w, \infty}=\pi\| e_N' \|_{w, \infty}$.
\end{proof}

We now state the main convergence result, whose proof follows from \eqref{boundeqcarper} and \Cref{collerrper} by eventually applying Rouche's theorem as in \cite[section 4]{BredaKuniyaRipollVermiglio2020}.
\begin{theorem}\label{boundpercheb}
Let $\lambda^*$ be a zero of $c$ in \eqref{chareq} with algebraic multiplicity $m$ and $z>0$ be s.t. $\lambda^*$ is the only zero of $c$ in $\bar B(\lambda^*, z)$. Then, there exists a positive integer $\bar N$ s.t., for every integer $N\ge \bar N$, $c_N$ in \eqref{chareqdisc} has in $\bar B(\lambda^*, z)$ exactly $m$ zeros $\lambda_{N,i}$, $i=1,\dots, m$, counting their multiplicities, and $\max_{i=1,\dots,m} |\lambda^*-\lambda_{N,i}|=O(\varepsilon_N^{1/m})$,
where $\varepsilon_N$ is given in \eqref{fboundCheb}.
\end{theorem}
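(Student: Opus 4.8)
The plan is to compare the two characteristic functions $c$ and $c_N$ on the circle $\partial B(\lambda^*, z)$ and apply Rouch\'e's Theorem to conclude that $c_N$ has exactly $m$ zeros (counting multiplicities) inside $B(\lambda^*, z)$ for $N$ large, and then extract the rate from the order of vanishing of $c$ at $\lambda^*$. First I would record the analyticity of both $c$ and $c_N$ in $\lambda$ on a neighborhood of $\bar B(\lambda^*, z)$: the map $\lambda \mapsto U^{(\lambda)}(\tau)$ is analytic because $A^{(\lambda)} = B/\lambda - M$ depends analytically on $\lambda$ for $\lambda\neq 0$ and the PMS depends analytically on the coefficient, and similarly $\lambda\mapsto U_N^{(\lambda)}(\tau)$ is analytic since the collocation problem \eqref{ivpcoll} is a finite linear system with entries analytic in $\lambda$ (using that \Cref{collerrper} guarantees unique solvability for $N\ge\bar N$, hence the defining matrix is invertible); composition with $\det(I - \cdot)$ preserves analyticity.

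Next I would set $\delta\coloneqq\min_{\lambda\in\partial B(\lambda^*,z)}|c(\lambda)|$, which is strictly positive since $c$ is analytic, not identically zero (it vanishes only at $\lambda^*$ inside the disk), and the circle is compact. By \eqref{boundeqcarper} and the bound \eqref{boundonen}--\eqref{fboundCheb} of \Cref{collerrper}, one has $\sup_{\lambda\in\partial B(\lambda^*,z)}|c_N(\lambda)-c(\lambda)| \le C\,\varepsilon_N \to 0$, where $C$ collects the local Lipschitz constant of the determinant, the norm $\|(I_{C_w}-A^{(\lambda)}V)^{-1}\|$ taken uniformly over the compact circle, and the factor $2\pi\sup|\alpha|$; uniformity in $\lambda$ here needs the observation that $\bar N(\lambda)$ and the relevant constants can be chosen uniformly on the compact set $\partial B(\lambda^*,z)$, which is a standard compactness argument combined with \eqref{Mastroianniext}. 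Hence for $N$ large enough $|c_N - c| < \delta \le |c|$ on the circle, and Rouch\'e's Theorem yields that $c_N$ has exactly $m$ zeros $\lambda_{N,i}$ in $B(\lambda^*,z)$ counted with multiplicity.

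Finally I would derive the convergence rate. Since $\lambda^*$ is a zero of the analytic function $c$ of multiplicity exactly $m$, we have $c(\lambda) = (\lambda-\lambda^*)^m g(\lambda)$ with $g$ analytic and $g(\lambda^*)\neq 0$, so there are constants $0<c_1\le c_2$ with $c_1|\lambda-\lambda^*|^m \le |c(\lambda)| \le c_2|\lambda-\lambda^*|^m$ on a possibly smaller disk. Evaluating at a zero $\lambda_{N,i}$ of $c_N$ gives $c_1|\lambda_{N,i}-\lambda^*|^m \le |c(\lambda_{N,i})| = |c(\lambda_{N,i})-c_N(\lambda_{N,i})| \le C\varepsilon_N$, whence $|\lambda_{N,i}-\lambda^*| \le (C\varepsilon_N/c_1)^{1/m} = O(\varepsilon_N^{1/m})$, giving the claimed bound after taking the maximum over $i=1,\dots,m$.

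The main obstacle is the uniformity in $\lambda$ of the estimate $|c_N(\lambda)-c(\lambda)|\le C\varepsilon_N$ over the circle $\partial B(\lambda^*,z)$: \Cref{collerrper} is stated pointwise in $\lambda$, with $\bar N$ and the operator-norm constant $\|(I_{C_w}-A^{(\lambda)}V)^{-1}\|_{C_w\leftarrow C_w}$ depending on $\lambda$, so one must argue — via continuity of $\lambda\mapsto A^{(\lambda)}V$ in operator norm on the compact circle (the pole at $0$ is avoided since $0\notin\bar B(\lambda^*,z)$ as $\lambda^*\neq0$), the perturbation bound \eqref{Mastroianniext}, and the Banach perturbation Lemma — that a single $\bar N$ and a single constant $C$ work uniformly; this is routine but is the only place where some care is genuinely required.
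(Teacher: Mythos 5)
Your proposal is correct and follows essentially the same route as the paper, which itself only sketches the argument by citing \eqref{boundeqcarper}, \Cref{collerrper} and the Rouch\'e-type argument of \cite[section 4]{BredaKuniyaRipollVermiglio2020}: uniform closeness of $c_N$ to $c$ on the circle, Rouch\'e to count the zeros, and the factorization $c(\lambda)=(\lambda-\lambda^*)^m g(\lambda)$ to extract the $O(\varepsilon_N^{1/m})$ rate. You also correctly identify and handle the one point requiring care, namely the uniformity in $\lambda$ over the compact circle of the constants and of $\bar N(\lambda)$ from \Cref{collerrper}.
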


\section{Numerical results}\label{results}
In this section, we experimentally validate the theoretical results of \cref{convanal} and discuss applications to epidemiological models. We propose two test examples. The first one is a model obtained from the linearization of a $d$-dimensional Susceptible--Infectious--Removed (SIR) model with seasonality in contact rates around the Disease-Free Equilibrium (DFE). Under suitable assumptions on the model coefficients, we derive an explicit expression
for $R_0$, and use it to numerically illustrate the convergence order of the numerical approximation %$R_{0,N}$ 
of $R_0$ for both the Fourier and Chebyshev methods (for the latter, we use both piecewise and nonpiecewise versions), which in turn depends on the regularity of the model parameters, see \Cref{teospecper},  \Cref{collerrper}, and \Cref{boundpercheb}. Moreover, we investigate computational times for large $d$ (i.e., $d=10, 50,100$), comparing the proposed scheme to the Monodromy Operator Method (MOM), which is based on the numerical solution of \eqref{nonlineareq} \cite{Bacaer2007,Wang2008}.\footnote{We do not make comparisons with methods based on the discretizion of \eqref{ngoeig}, as it is clear that these are computationally more challenging than the method here proposed.} 
The MOM is applied by choosing as initial guess the value of $R_0$ for the averaged problem. At each iteration, the $\lambda$-dependent ODE system is solved over one period using the MATLAB solvers \textsf{ode45} or \textsf{ode113}. Finally, the nonlinear equation is solved by using the MATLAB bult-in function \textsf{fzero}.

The second example has $d=5$, and is obtained by linearizing a vector-host model around the Disease-Free Periodic Solution (DFPE). We use it to illustrate how the proposed method can be easily adapted to compute TRNs. In this case, analytic expressions for the reproduction numbers are not available. Hence, the errors are computed with respect to reference values obtained with the Fourier method using $N=151$.\footnote{We always use the Fourier method with odd $N$. Numerical experiments (not included here) show the possible presence of a spurious dominant eigenvalue for $N$ even.}

In the following, for computational efficiency, we always take $\tau=1$, as it is easy to see that, for $\tau \in(0,+\infty)$, \eqref{gep} can always be scaled as $\hat B( t)\phi(t)=\lambda[\phi'(t)+\hat M(t)\phi(t)]$, for $(\lambda, \phi)\in \R\times AC_1(\R, \R^d)$, where $\hat B(t):=\tau B(\tau t)$, $\hat M(t):=\tau M(\tau t)$, for $t\in [0,1]$. 

The tests are performed with MATLAB R2020b on a
machine with Intel Core i3-1005G1 CPU and 8 Gb of RAM, running Windows 10.

\subsection{A multi-group SIR model with seasonality in contact rates}\label{SectionSIR}
Inspired by \cite[section 8.8 and 8.9]{IannelliPugliese}, we consider an 
SIR model for the spread of an infectious disease in a population which is split into $d\geq 1$ groups with seasonal contact rates. In the model, we include demography and neglect waning immunity and disease-induced deaths. 
Denoting by $S_j(t),  I_j(t)$ and $R_j(t)$ the number of susceptible, infectious and recovered individuals, respectively, at time $t$, and by $P_j(t)=S_j(t)+I_j(t)+R_j(t)$ the total size of group $j=1 \dots d$, the model reads
\begin{equation}\label{linVB}
\left\{\setlength\arraycolsep{0.1em}
\begin{array}{rl} 
S_j'(t)&= \mu_j(P_j(t)-S_j(t)) -S_j(t) \displaystyle\sum_{k=1}^d \beta_{j k}(t)\cfrac{I_k(t)}{P_k(t)},\\
I_j'(t)&= S_j(t) \displaystyle\sum_{k=1}^d \beta_{j k}(t)\cfrac{I_k(t)}{P_k(t)} - (\gamma+\mu_j) I_j(t),\\[6mm]
%I_j'(t)&= \nu_j E_j(t) - (\gamma+\mu_j) I_j(t),\\[5.5mm]
R_j'(t)&= \gamma I_j(t)-\mu_j R_j(t),
\end{array} 
\right.
\end{equation}
where $1/\mu_j$ is the average life span of group $j$, $1/\gamma$ is the average infectious period, and
$\beta_{jk}(t)= qc_j(t)c_k(t)P_k/(\sum_{h=1}^d c_h(t) P_h)$, $1 \leq j$, $k \leq d$, is the   transmission rate from an infectious individual of group $k$ to a susceptible individual of group $j$, with $q$ the probability of infection per contact, and $c_j(t)> 0$ the $\tau$-periodic contact rate of  individuals of group $j$. 
 Note that $P_j'= 0$ for every $j=1,\dots, d$, so $P_j$ is constant over time. 
By defining $s_j:=S_j/P_j$, $i_j:=I_j/P_j$ and $r_j=R_j/P_j$, $j=1,\dots, d$, the resulting model admits the DFE $s_j^*=1,\ i_j^*= r_j^*=0$, $j=1,\dots, d$. The linearized equations for the infectious individuals around the DFE 
can be recast as in \eqref{ODEBM} by taking
$\left[B(t)\right]_{j,k}=\beta_{jk}(t)$, $j,k=1,\dots, d$, and $M(t)\equiv\diag\left(\mu_1+\gamma,\dots, \mu_d+\gamma \right)$\footnote{Being $M$ time independent, the associated evolutionary system $\{V(t, s)\ :\ t, s\in \R,\ t\ge s\}$ is explicitly given by $V(t,s)= e^{-M(t-s)}$ and the NGO $\mathcal{K}$ can be explicitly derived. 
}.

In the following, we assume 
$\mu_j\equiv \mu>0$ and $c_j(t)= P_j f(t)$ for some nonnegative $f\in L^\infty_\tau(\R, \R_{>0})$,  $j=1,\dots, d$.
Under these assumptions, we have
\begin{equation}\label{R0SIR}
R_0=\cfrac{1}{\gamma+\mu}\sum_{k=1}^d
\cfrac{1}{\tau}\int_0^\tau [B(\theta)]_{k,k}\dd \theta,
\end{equation}
with generalized eigenfunction
\begin{equation*}
[\phi(t)]_j=\exp\left(\cfrac{1}{R_0}\sum_{k=1}^d\cfrac{1}{\tau}\int_0^t [B(\theta)]_{k,k}\dd \theta - (\gamma+\mu)t\right)P_j,\qquad j=1,\dots, d,\end{equation*}
and eigenfunction $\psi=B\phi/R_{0}$.
Notice that for $d=1$, one recovers the expression derived in \cite[section 5]{Bacaer2006}. We use \eqref{R0SIR} to investigate the convergence of the proposed method. We assume parameters inspired by measles and adapted from \cite[section 8.8 and 8.9]{IannelliPugliese}: $\tau =1$ (years), $1/\gamma= 7$ (days), $q= 60\%$,  $1/\mu=82$ (years), and $P_j\equiv P=1000$,\  $j=1,\dots, d$. For the readers' convenience, we summarize the parameters in \Cref{TableSIR}.
\begin{table}
\footnotesize\caption{Parameters for the numerical experiments of \cref{SectionSIR}, adapted from \cite[chapter 8]{IannelliPugliese}.}
\label{TableSIR}
\begin{center}
\begin{tabular}{c  c c }
\rowcolor{gray!20}
Parameter & interpretation & value \\[0.5mm]
\hline
$\tau$ & time period & 1 (years)\\[0.5mm]
$q$ & probability of infection per contact & 60\% \\[0.5mm]
$1/\gamma$ & average infectious period & 7 (days) \\[0.5mm]
$1/\mu$ & average life span (constant among groups) & 82 (years)\\[0.5mm]
$P$ & population size (constant among groups) & $1000$ \\[0.5mm]
\hline
\end{tabular}
\end{center}
\end{table}

\noindent Finally, inspired by \cite{buonomo2018seasonality}, we consider the following choices for $f$:
\begin{enumerate}[(F1)]
\item \label{F1} $f(t)= 1+\delta \sin(2\pi t/\tau-\eta),\quad \delta,\eta\in [0, 1)$,\smallskip
\item \label{F2} $f(t)= \kappa + \kappa\frac{\left(0.68 +\sin(2\pi t/\tau)\right)}{\left(1+2/3 \sin(2\pi t/\tau)\right)},\quad \kappa:=\left(5/2 - (123\sqrt{5})/250\right)^{-1}$,\smallskip
\item \label{F3} $f(t) =0.2+ \mathbbm 1_{[0, 0.4]\cup[0.6, 1]}(t/\tau)$.
\end{enumerate}
For these choices, $R_0\approx 11.504158733340011$.
Note that, under \ref{F3} $f$ has jump discontinuities at $\tau=0.4$ and $\tau=0.6$. In this case, we resort to the piecewise version of the Chebyshev method.
\begin{figure}[h]
\begin{center}
\includegraphics[width=1.\textwidth]{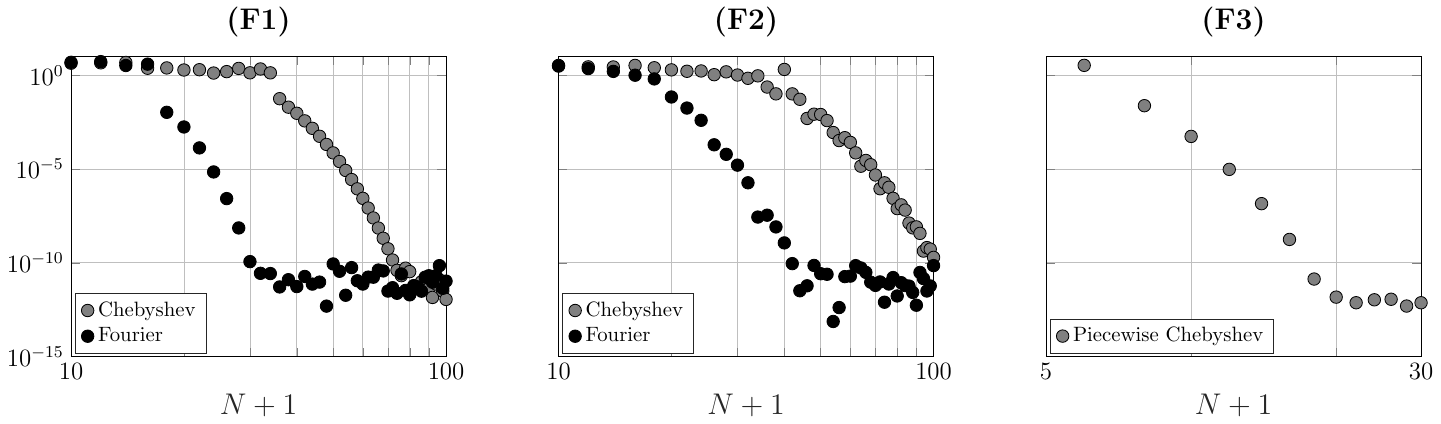}
\caption{\Cref{SectionSIR}: log-log plot of the error on $R_0$ with the Fourier method (black dots) and the Chebyshev method (gray dots) for increasing (odd) $N$ under \textsc{(F1)}  (left), \textsc{(F2)} (center) and \textsc{(F3)} (right), with $d=1$, $\delta=0.6$ and $\eta=0.2$. In the right panel we use the piecewise version of the Chebyshev method.}\label{SIRconv}
\end{center}
\end{figure}
\begin{figure}[h]
\begin{center}
\includegraphics[width=.75\textwidth]{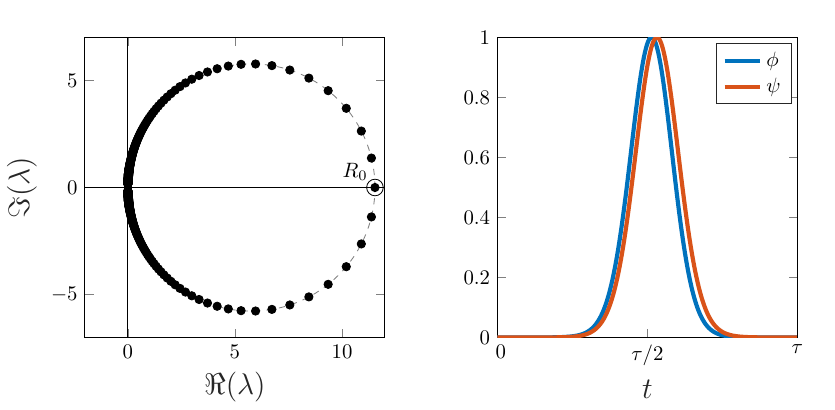}
\caption{\Cref{SectionSIR}: computed spectum (left) and eigenfunctions relevant to $R_0$, $\phi$ and $\psi$, (right) for the Fourier method with $N=501$ under \textsc{(F1)}, for $d=1$, $\delta=0.6$ and $\eta=0.2$.}\label{SIRspec&eig}
\end{center}
\end{figure}
In the left and center panels of \cref{SIRconv}, infinite order of convergence is observed for both the Fourier and the Chebyshev method under \ref{F1} and \ref{F2}, which is in accordance with the results of \cref{convanal}, as the model coefficients are real analytic. In particular, we observe that while a small $N+1$ (say 30 or 40) is enough for the Fourier method to obtain errors of the order of $10^{-10}$, for the Chebyshev method larger choices of $N$ are required to recover similar results. The right panel shows infinite order of convergence for the piecewise version of the Chebyshev method under \ref{F3}. 
\Cref{SIRspec&eig} shows the computed spectrum of the NGO and the eigenfunctions relevant to $R_0$ (normalized as $\|\phi\|_{\infty}=1$) computed with Fourier using $N=501$ under \ref{F1}, for $d=1$, $\delta = 0.6$ and $\eta =0.2$. Note that the results in the left panel are in agreement with those of \cite[chapter 7]{Bacaer2021b}, i.e., for $d=1$, all the eigenvalues of the NGO in the complex plane lie on the circle of diameter $R_0$ and centered at $(R_0/2, 0)$.
\begin{table}
\footnotesize\caption{\Cref{SectionSIR}: comparison between the Fourier method and the MOM under \textsc{(F1)} for increasing values of $d$, by varying $N$ for the Fourier method, the ODEs solver for the MOM,  with $\delta=0.6$, $\eta=0.2$. For the Fourier method, we compute the dominant eigenvalue only with the MATLAB built-in function \textsf{eigs$(\mathcal B_N, \mathcal M_N)$}. 
}
\label{SIRexpComp}
\begin{center}
\begin{tabular}{c c c r l l}
\rowcolor{gray!20}
method & $N$  & solver & $d$ & absolute error & CPU time (s) \\
[0.5mm]
\hline
Fourier  & 25 & - & 10 & $2.67\times 10^{-7}$ & $1.42\times 10^{-2}$ \\  
&  & - & 50 & $2.67\times 10^{-7}$ & $1.23\times 10^{-1}$ \\
   &  & - & 100 & $2.67\times 10^{-7}$ & $4.57\times 10^{-1}$ \\
  & 35 & - & 10 & $1.24\times 10^{-11}$
 & $2.48\times 10^{-2}$ \\
 &  & - & 50 &  $9.47\times 10^{-12}$ & $2.29\times 10^{-1}$\\
   &  & - & 100 & $9.57\times 10^{-12}$ & $1.30\times 10^{0}$ \\
\hline
MOM  & - & ode45 & 10 & $4.09\times 10^{-4}$ & $3.34\times 10^{-1}$  
\\ 
& - &  & 50 & $5.81\times 10^{-4}$ & $1.90\times 10^{0}$ \\
   & - &  & 100 & $6.17\times 10^{-4}$ & $1.23\times 10^1$ \\
  & - & ode113 & 10 & $6.73\times 10^{-4}$ & $2.45\times 10^{-1}$ \\  
  & - &  & 50 & $9.41\times 10^{-4}$ & $1.65\times 10^{0}$ \\
   & - &  & 100 & $5.91\times 10^{-4}$ & $9.05\times 10^{0}$ \\   
\hline
\end{tabular}
\end{center}
\end{table}

Finally, in \Cref{SIRexpComp}, we compare the Fourier method with the MOM, both in terms of accuracy and CPU time, for increasing values of $d$, under \ref{F1}. In particular, for the Fourier method we compute the real dominant eigenvalue only through the MATLAB function \textsf{eigs$(\mathcal B_N, \mathcal M_N)$}. Note that the Fourier method generally provides smaller absolute errors for $R_0$ requiring lower CPU times (for the MOM, the absolute errors are always of the order of $10^{-4}$, which is due to the accuracy of the MATLAB solver \textsf{fzero}).
Thus, the method here proposed not only has the advantage of being simple to implement and,  differently from the MOM, to allow to compute the whole spectrum of the NGO and the eigenfunctions of both $\mathcal K$ and $\mathcal H$ (where the latter is defined as in \Cref{subconvFourier}), but it also generally allows one to obtain more accurate results with lower computational times.

\subsection{The TRN for a vector-host model}\label{SectionType}
In epidemiology, the TRN $T$ for a specific host type represents the expected number of secondary cases in individuals of a certain type produced by one typical infected individual of the same type during its entire infectious period, either directly or through chains of infection passing through any sequence of the other types \cite{HEESTERBEEK20073, Heesterbeek2003}. See \cite{bacaer2012model, Inaba2012} for the definition of the TRN  in periodic environments.
In this section, we show how to adapt the method presented in \cref{method} to compute TRNs in, e.g., vector-host models.  Following \cite{Bacaer2007, heesterbeek1995threshold}, we consider a model for the spread of a mosquito-transmitted disease with seasonal fluctuations in the vector population. 
We ignore both human demography and waning immunity, and we neglect disease-induced death in both humans and vectors.
Inspired by \cite{feng2019modelling, poletti2011}, we assume that, upon infection, both humans and mosquitoes undergo a latent period (exposed phase).  Then, exposed humans become infectious, and either develop symptoms with probability $\omega \in [0,1]$, or become asymptomatic (we assume that there is no transition between the asymptomatic and symptomatic phase). Let us denote with $S_H(t),\ E_H(t),\ A_H(t),\ I_H(t)$ and $R_H(t)$ the number of susceptible, exposed, asymptomatic, symptomatic and removed humans, respectively, at time $t\in \R$,  by $S_V(t),\ E_V(t)$ and $I_V(t)$ the number of susceptible, exposed and infectious mosquitoes, respectively, at time $t\in \R$, and by $P_H=S_H+E_H+A_H+I_H+R_H$ and $P_V=S_V+E_V+I_V$ the total population size of humans and mosquitoes, respectively. The model couples the following system of ODEs for the humans\begin{equation}\label{VBHsystemH}
\left\{\setlength\arraycolsep{0.1em}\begin{array}{rl} 
S_H'(t)&= - b q_{H\leftarrow V} I_V(t)\cfrac{S_H(t)}{P_H(t)},\\[1.5mm]
E'_H(t)&= b q_{H\leftarrow V}I_V(t)\cfrac{S_H(t)}{P_H(t)} - \nu_H E_H,\\[1.5mm]
A_H'(t)&=  (1-\omega)\nu_H E(t) -\gamma A_H(t),\\[1.5mm]
I_H'(t)&=  \omega\nu_H E(t) -\gamma I_H(t),\\[1.5mm]
R_H'(t)&= \gamma\left(A_H(t)+I_H(t)\right),
\end{array} 
\right.
\end{equation}
with the following system for the mosquitoes
\begin{equation}\label{VBVsystemH}
\left\{\setlength\arraycolsep{0.1em}\begin{array}{rl} 
S_V'(t)&= \Lambda(t) -  b q_{V\leftarrow H} S_V(t)\cfrac{A_H(t)+I_H(t)}{P_H(t)}-\mu  S_V(t),\\[1.55mm]
E'_V(t)&= b q_{V\leftarrow H} S_V(t)\cfrac{A_H(t)+I_H(t)}{P_H(t)} - (\nu_V +\mu) E_H,\\[1.5mm]
I_V'(t)&= \nu_V I_V(t)-\mu I_V(t),
\end{array} 
\right.
\end{equation}
where $b$ is the mosquito biting rate, $q_{H\leftarrow V}$ and $q_{V\leftarrow H}$ are the probabilities of transmission per bite from vector to humans and from humans to vector, respectively, $1/\nu_H$ and $1/\nu_V$ are the average latent periods for humans and mosquitoes, respectively,  $1/\gamma$ is the average human infectious period, $1/\mu $ is the mosquito average life span, and $\Lambda(t)\ge 0$ is the $\tau$-periodic mosquito inflow rate.
Note that $P_H'=0$, thus $P_H$ is constant, while $P_V$ satisfies the linear ODE $P_V'(t)=\Lambda(t)-\mu P_V(t).$ We take $\Lambda(t)=\left(1+\varepsilon\cos\left(\frac{2\pi t}{\tau}\right)\right)\mu P_V(t)$, for $\varepsilon\in (0,1)$, so that, for every $P_V(0)\ge0$, %Hence, given $p_0\coloneqq P_V(0)$, 
system \eqref{VBHsystemH}-\eqref{VBVsystemH} admits the unique DFPS $S_H^*(t)\equiv P_H,\ E_H^*(t)=A_H^*(t)=I_H^*(t)=E_V^*(t)=I_V^*(t)\equiv 0,\ S_V^*(t)=P_V(t)$, where %the unique periodic solution 
$P_V(t)=P_V(0)f(t)$ with $f(t)=\exp\left(\frac{\varepsilon\tau \mu }{2\pi}\sin\left(\frac{2\pi t}{\tau}\right)\right)$.
By defining %let us define two 
the infection matrices $[B_H]_{j,k}=\delta_{(j,k), (1,5)}bq_{H\leftarrow V}$, $[B_V(t)]_{j,k}=\left[\delta_{(j,k), (4,2)} + \delta_{(j,k), (4,3)}\right]bq_{V\leftarrow H} r f(t)$,
$j,k=1,\dots, 5$ for $\delta$ the Dirac delta and $r :=P_V(0)/P_H$, and the baseline transition matrix 
\begin{equation*}
M_0=
\begin{pmatrix}
 \nu_H & 0 & 0 & 0 & 0  \\
 -(1-\omega)\nu_H & \gamma & 0 & 0 & 0  \\
 -\omega\nu_H & 0 & \gamma & 0 & 0  \\
 0 & 0 & 0 & \nu_V+\mu  & 0  \\
 0 & 0 & 0 & -\nu_V & \mu  
\end{pmatrix},
\end{equation*} 
the linearized equations for the infected individuals around the DFPS 
can be recast as in \eqref{ODEBM} for each of the following choices:
\begin{enumerate}
\item \label{R0} $B=B_H+B_V$ and $M\equiv M_0$,
\item \label{TH} $B\equiv B_H$ and $M=M_0-B_V$,
\item \label{TV} $B=B_V$ and $M\equiv M_0-B_H$.
\end{enumerate}
Each of these corresponds to a different reproduction number: the BRN $R_0$ under \ref{R0} (as $B$ contains all birth terms); the TRN for the host population $T_H$  under \ref{TH}; and the TRN for the vector population $T_V$ under \ref{TV}.
Indeed, using notation similar to those previously introduced, it is not difficult to see that, e.g.,  under \ref{TH}, 
$\mathcal M$ and the positive operator $\mathcal K_V:=\mathcal B_V\mathcal M_0^{-1}$ satisfy $s(-\mathcal M)<0$ and $\rho(\mathcal K_V)<1$, respectively. Hence, by letting $\mathcal K_H:=\mathcal B_H\mathcal M_0^{-1}$, the operator
$\mathcal K_H\sum_{m=0}^\infty(\mathcal K_V)^m=\mathcal K_H(I_X-\mathcal K_V)^{-1}=\mathcal B_H(\mathcal M_0-\mathcal B_V)^{-1}=:\mathcal B\mathcal M^{-1}$
is well-defined and positive, and 
$T_H:=\rho(\mathcal B\mathcal M^{-1})$ \cite{bacaer2012model, Inaba2012}. 
We take parameters inspired by the Chikungunya outbreak in Italy of 2007 and adapted from \cite{guzzetta2016, poletti2011} (see also \cite{feng2019modelling} and references therein): $\tau =1$ (years), $b=0.09$ (days$^{-1}$), $q_{H\leftarrow V}=65\%$, $q_{V\leftarrow H}=85\%$, $1/\nu_H=2.5$ (days), $1/\nu_V=1.5$ (days), $\omega=82\%$, $1/\gamma= 7$ (days) and $1/\mu =20$ (days). Finally, we assume  $r=10$ and, following \cite{heesterbeek1995threshold}, 
 $\delta\coloneqq \frac{\varepsilon\tau\mu }{2\pi} <1$, %so that $P_V(t)\approx P_V(0)\left(1+\delta\sin(2\pi t)\right)$,
so that $R_0^2\in [2.8, 3.4]$, in agreement with \cite{poletti2011} (where the authors provide an estimate $R_0^2\approx 3.3$). For the readers' convenience, parameters are listed in \Cref{TableSIR2}. \begin{table}
\footnotesize\caption{Parameters used for the numerical experiments of \cref{SectionType} and adapted from \cite{guzzetta2016, poletti2011}.}
\label{TableSIR2}
\begin{center}
\begin{tabular}{c  c c }
\rowcolor{gray!20}
Parameter & interpretation & value\\[0.5mm]
\hline
$\tau$ & time period & 1 (years)\\[0.5mm]
$b$ & mosquito biting rate  & 0.09 (days)\\[0.5mm]
$q_{H\leftarrow V}$ & probability of transmission per bite from mosquito to human & 65\% \\[0.5mm]
$q_{V\leftarrow H}$ & probability of transmission per bite from human to mosquito & 85\% \\[0.5mm]
$1/\nu_H$ &  average human latent period  & 2.5 (days)\\[0.5mm]
$1/\nu_V$ &  average mosquito latent period
 & 1.5 (days)\\[0.5mm]
$\omega$ & probability of developing symptoms for humans & 82\%\\[0.5mm]
$1/\gamma$ &  average human infectious period & 3.5 (days) \\[0.5mm]
$1/\mu$ & average mosquito life span & 20 %1/28 
(days)\\[0.5mm]
$r$ & ratio between initial mosquito and human populations & 10 \\
\hline
\end{tabular}
\end{center}
\end{table}

In the following, in light of the results of \cref{SectionSIR}, we only use the Fourier method, as the model coefficients are real analytic.
\begin{figure}
\begin{center}
\includegraphics[width=1.\textwidth]{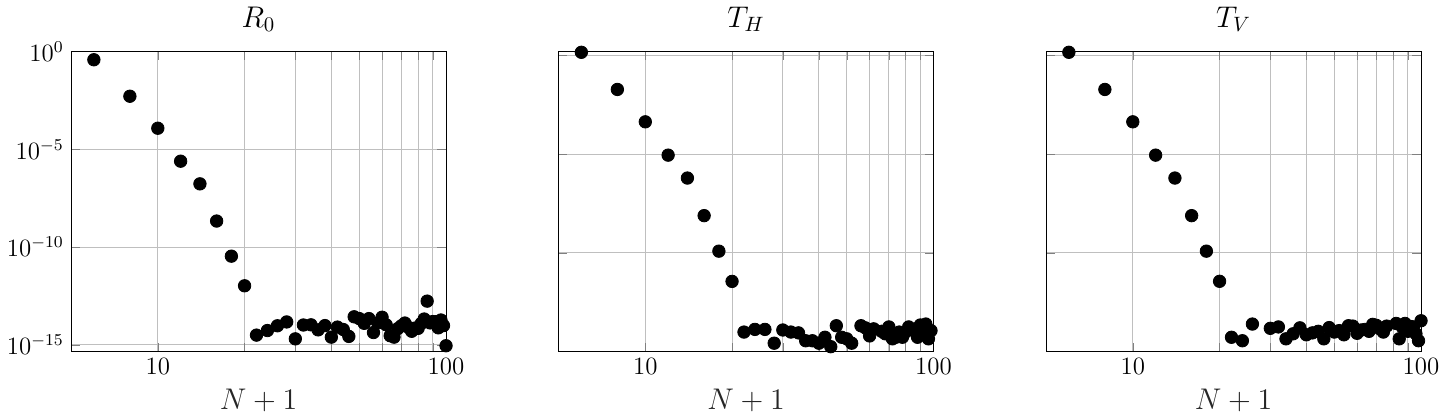}
\caption{\Cref{SectionType}: log-log plot of the error on $R_0$  (left), $T_H$ (center) and $T_V$ (right) with the Fourier method for increasing (odd) $N$, for $\delta=0.8$. Reference values $R_0\approx 1.785206757378205$,  $T_H\approx 3.186963166588779$ and $T_V\approx 3.186963166588749$ computed with $N=151$.}\label{SEAIR_SEIconv}
\end{center}
\end{figure}
\Cref{SEAIR_SEIconv} shows infinite order of convergence for all reproduction numbers, in accordance with \cref{convanal}.
\begin{table}
\footnotesize
\caption{\Cref{SectionType}: computed values for $R_0$, $R_0^2$, $T_H$, $T_V$, and relevant computational times using the Fourier method with $N=25$, for $\delta=0.8$.}
\label{SAEIR_SI_table}
\begin{center}
\begin{tabular}{c c c}
\rowcolor{gray!20}
Reproduction number & value &  CPU time (s) \\
[0.5mm]
\hline
$R_0$ & 1.785206757378213
 &  $8.0\times 10^{-3}$ \\
$R_0^2$ &    3.186963166588834
 &   \\
$T_H$ &  3.186963166588752 &  $6.5\times 10^{-3}$ \\
$T_V$ &    3.186963166588752
 &  $5.6\times 10^{-3}$ \\
\hline
\end{tabular}
\end{center}
\end{table}
\Cref{SAEIR_SI_table} shows the computed values of $R_0, T_H, T_V$, and relevant CPU times, for $N=25$. Note the well-known relation between $R_0$ and the TRNs for a one-vector-one-host model, i.e. $R_0^2=T_H=T_V$ \cite{bacaer2012model, heesterbeek1995threshold2, HEESTERBEEK20073}.
\begin{figure}
\begin{center}
\includegraphics[width=1.\textwidth]{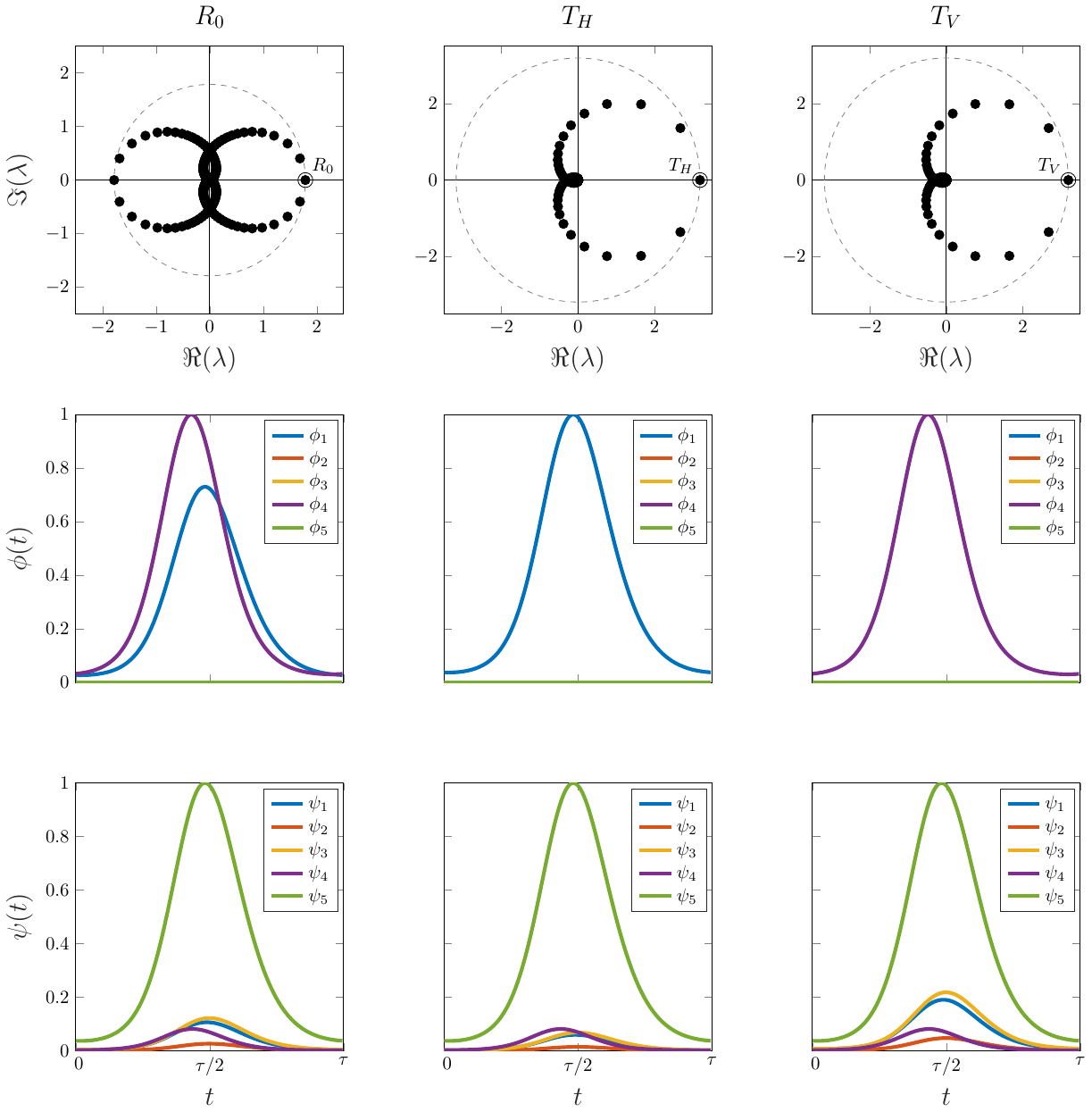}
\caption{\Cref{SectionType}: computed spectra (upper row), eigenfunction (center row) and generalized eigenfunction (lower row) relevant to the reproduction number under \ref{R0} (left column), \ref{TH} (center column) and \ref{TV} (right column), for the Fourier method with $N=201$ and $\delta =0.8$. Note that, in the center row, the only nonzero components of $\phi$ are those relevant to the exposed classes, as all newly infected individuals are exposed. In particular, in the left panel, both $\phi_1$ and $\phi_4$ are nonzero, while in the center and right panels, the only nonzero component is $\phi_1$ and $\phi_4$, respectively. See for comparison the plot concerning the generalized eigenfunctions.}\label{SEAIR_SEI_eig}
\end{center}
\end{figure}
In \Cref{SEAIR_SEI_eig}, we plot spectra and eigenfunctions relevant to $R_0$, $T_H$ and $T_V$ under \ref{R0}, \ref{TH} and \ref{TV}, respectively. Note the different structures of the spectra between the case \ref{R0} and \ref{TH}-\ref{TV}.
\begin{figure}
\begin{center}
\includegraphics[width=1.\textwidth]{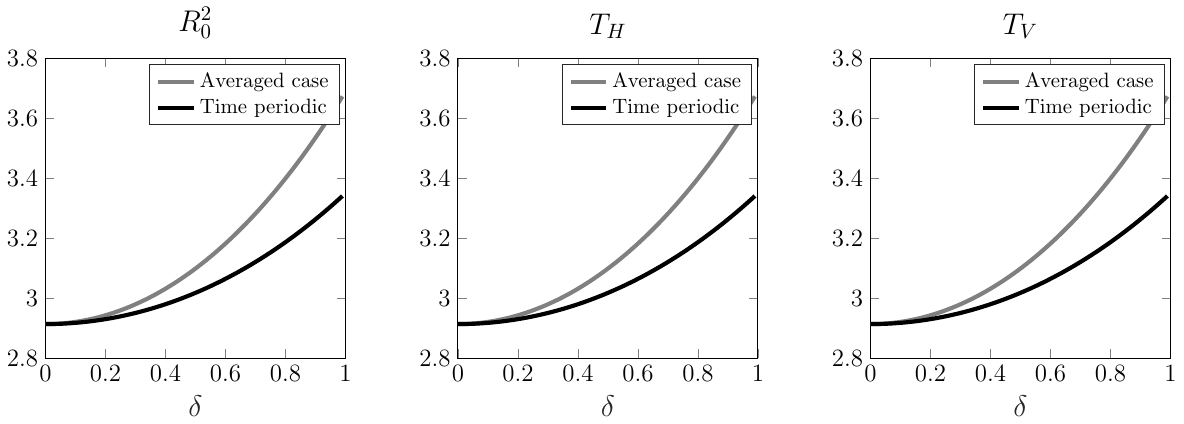}
\caption{\Cref{SectionType}: $R_0^2$ (left), $T_H$ (center) and $T_V$ (right) as functions of $\delta=\varepsilon\tau\mu/(2\pi)$ computed with the Fourier method using $N=25$. Note that the reproduction numbers for the averaged model also vary with $\delta$, as the average of $f$ over $[0, \tau]$ differs from 1.}\label{SEAIR_SEI_bif}
\end{center}
\end{figure}
Finally, \Cref{SEAIR_SEI_bif} shows the behavior of $R_0, T_H$ and $T_V$ for increasing values of $\delta \in [0, 1)$. For comparison, we include the same results for the averaged model. Note that for this choice of the parameters, the reproduction number for the time-periodic problem is always lower than that for the averaged one. In particular, the discrepancy between the two values increases with $\delta$, in accordance with \cite{Bacaer2007}.

\section*{Acknowledgements}
The authors are grateful to Rossana Vermiglio for many useful discussions. 
DB and SDR are members of the INdAM Research group GNCS and of the UMI Research group
“Modellistica socio-epidemiologica”. The work of DB and SDR was partially supported by the Italian Ministry of University and Research (MUR) through the PRIN 2020 project (No. 2020JLWP23) “Integrated Mathematical Approaches to Socio-Epidemiological Dynamics”, Unit of Udine (CUP: G25F22000430006). SDR is supported by the project “One Health Basic and Translational Actions Addressing
Unmet Needs on Emerging Infectious Diseases” (INF-ACT), BaC “Behaviour and sentiment monitoring and modelling for outbreak control/BEHAVE-MOD” (PE00000007 – CUP I83C22001810007)
funded by the NextGenerationEU. JR is part of the Catalan research group 2021 SGR 00113. JR is supported by grants PID2021- 123733NB-I00 and RED2022-134784-T funded by the Spanish Ministerio de Ciencia, Inovación y Universidades.
This research was partially conducted while SDR was visiting JR at the Department of Computer Science, Applied Mathematics and Statistics of the University of Girona, Spain, and while SDR was a PhD student at the Department of Mathematics, Computer Science and Physics of the University of Udine.

\printbibliography
\end{document}